\newcommand{\me}{\mathrm{e}}
\newcommand{\dif}{\mathrm{d}}
\numberwithin{equation}{section}
\begin{document}
\titlerunning{On the initial value problem for the Navier-Stokes equations}
\title{Well-posedness for the Navier-Stokes equations with datum in the Sobolev spaces}
\author{D. Q. Khai}
\institute{Dao Quang Khai \at Institute of Mathematics, Vietnam Academy of Science and Technology, 18 Hoang Quoc Viet, 10307  Cau Giay, Hanoi, Vietnam  \\
              \email{khaitoantin@gmail.com}
           }
\date{Received: date / Accepted: date}
\maketitle
\begin{abstract}
In this paper, we study local well-posedness for the Navier-Stokes \linebreak equations with arbitrary initial data in homogeneous Sobolev spaces $\dot{H}^s_p(\mathbb{R}^d)$ for $d \geq 2, p > \frac{d}{2},\ {\rm and}\ \frac{d}{p} - 1 \leq s  < \frac{d}{2p}$. The obtained result improves the known ones for $p > d$ and $s = 0$ (see \cite{M. Cannone 1995,M. Cannone. Y. Meyer 1995}). In the case of critical indexes $s=\frac{d}{p}-1$, we prove global well-posedness for Navier-Stokes equations when the norm of the initial value is small enough. This result is a generalization of the one in  \cite{M. Cannone 1997} in which $p = d$ and $s = 0$.
\end{abstract}

\keywords{Navier-Stokes equations, existence and uniqueness of local and global mild solutions, critical Sobolev and Besov spaces.}

\subclass{Primary 35Q30; Secondary 76D05, 76N10.}


\section{Introduction}
We consider the Navier-Stokes equations (NSE) in $d$ dimensions 
in special setting of a viscous, homogeneous, incompressible fluid 
which fills the entire space and is not submitted to external forces. 
Thus, the equations we consider are the system 
\begin{align} 
\left\{\begin{array}{ll} \partial _tu  = \Delta u - 
\nabla\cdot(u \otimes u) - \nabla p , & \\ 
\nabla\cdot u = 0, & \\
u(0, x) = u_0, 
\end{array}\right . \notag
\end{align}
which is a condensed writing for
\begin{align} 
\left\{\begin{array}{ll} 1 \leq k \leq d, \ \  \partial _tu_k  
= \Delta u_k - \sum_{l =1}^{d}\partial_l(u_lu_k) - \partial_kp , & \\ 
\sum_{l =1}^{d}\partial_lu_l = 0, & \\
1 \leq k \leq d, \ \ u_k(0, x) = u_{0k} .
\end{array}\right . \notag
\end{align}
The unknown quantities are the velocity $u(t, x)=(u_1(t, x),\dots,u_d(t, x))$ 
of the fluid element at time $t$ and position $x$ and the pressure $p(t, x)$.\\
A translation invariant Banach space of tempered distributions $\mathcal{E}$ 
is called a critical space for NSE if its norm is invariant under the action 
of the scaling $f(.) \longrightarrow \lambda f(\lambda .)$. One can take, 
for example, $\mathcal{E} = L^d(\mathbb{R}^d)$ or the smaller 
space $\mathcal{E} = \dot{H}^{\frac{d}{2} - 1}(\mathbb{R}^d)$. 
In fact, one has the “chain of critical spaces” given by the continuous embeddings
\begin{equation}\label{eq1}
\dot{H}^{\frac{d}{2} - 1}(\mathbb{R}^d) \hookrightarrow 
L^d(\mathbb{R}^d) \hookrightarrow \dot{B}^{\frac{d}{p}-1}_{p, \infty}
(\mathbb{R}^d)_{(p < \infty)} \hookrightarrow BMO^{-1}(\mathbb{R}^d) 
\hookrightarrow \dot{B}^{- 1}_{\infty, \infty}(\mathbb{R}^d).
\end{equation}
It is remarkable feature that NSE are well-posed in the sense of Hadarmard 
(existence, uniqueness and continuous dependence on the data) when the initial 
datum is divergence-free and belongs to the critical function spaces 
(except $\dot{B}^{- 1}_{\infty, \infty}$) listed in 
$\eqref{eq1}$ (see \cite{M. Cannone 1995} for 
$\dot{H}^{\frac{d}{2} - 1}(\mathbb{R}^d)$, $L^d(\mathbb{R}^d)$, 
and $\dot{B}^{\frac{d}{p}-1}_{p, \infty}(\mathbb{R}^d)$,  
see \cite{H. Koch 2001} for $BMO^{-1}(\mathbb{R}^d)$, 
and the recent ill-posedness result  \cite{J. Bourgain 2008} 
for $\dot{B}^{- 1}_{\infty, \infty}(\mathbb{R}^d)$).\\
In the 1960s, mild solutions were first  constructed by Kato and 
Fujita \cite{H. Fujita 1964,T. Kato 1962} that are continuous 
in time and take values in the Sobolev space \linebreak 
$H^s(\mathbb{R}^d), 
(s \geq \frac{d}{2} - 1)$, say $u \in C([0, T); H^s(\mathbb{R}^d))$. 
In 1992, a modern treatment for mild solutions in 
$H^s(\mathbb{R}^d), (s \geq \frac{d}{2} - 1)$ 
was given by Chemin \cite{J. M. Chemin 1992}. In 1995, using 
the simplified version of the bilinear operator, Cannone proved 
the existence for mild solutions in $\dot{H}^s(\mathbb{R}^d), 
(s \geq \frac{d}{2} - 1)$, see \cite{M. Cannone 1995}. 
Results on the existence of mild solutions with value in 
$L^p(\mathbb{R}^d), (p > d)$ were established in  the papers 
of  Fabes, Jones and Rivi\`{e}re \cite{E. Fabes 1972a} 
and of Giga \cite{Y. Giga 1986a}. Concerning the initial 
datum in the space $L^\infty$, the existence of a mild solution was 
obtained by Cannone and Meyer in \cite{M. Cannone 1995,M. Cannone. Y. Meyer 1995}. Moreover, in \cite{M. Cannone 1995,M. Cannone. Y. Meyer 1995}, they also obtained theorems 
on the existence of mild solutions with value in the Morrey-Campanato space 
$M^p_2(\mathbb{R}^d), (p > d)$ and the 
Sobolev space $H^s_p(\mathbb{R}^d), (p < d, \frac{1}{p} 
- \frac{s}{d} < \frac{1}{d})$. NSE in the  
Morrey-Campanato space were also treated by Kato  \cite{T. Kato 1992} and 
Taylor \cite{M.E. Taylor 1992}. In 1981, Weissler \cite{F.B. Weissler:1981} gave the first existence result 
of mild solutions in the half space $L^3(\mathbb{R}^3_+)$. 
Then Giga and Miyakawa \cite{Y. Giga 1985} generalized the
result to $L^3(\Omega)$, where $\Omega$ is an open bounded domain in 
$\mathbb{R}^3$. Finally, in 1984, Kato \cite{T. Kato 1984} obtained, 
by means of a purely analytical tool (involving only the 
 H$\ddot{\text{o}}$lder and 
Young inequalities and without using any estimate of fractional 
powers of the Stokes operator), an existence theorem in the whole 
space $L^3(\mathbb{R}^3)$. In \cite{M. Cannone 1995,M. Cannone 1997}, Cannone showed how to simplify Kato's proof. The idea is to take the advantage of the structure of the bilinear operator in its scalar form. 
In particular, the divergence $\nabla$ and heat $\me^{t\Delta}$ operators can 
be treated as a single convolution operator. Recently, the authors of this article have considered NSE in Sobolev spaces, Sobolev-Lorentz spaces, mixed-norm Sobolev-Lorentz spaces, and Sobolev-Fourier-Lorentz spaces, see \cite{N. M. Tri: Tri2014????}, \cite{N. M. Tri: Tri2015??}, \cite{N. M. Tri: Tri2014a}, and \cite{N. M. Tri: Tri2015?} respectively. In \cite{N. M. Tri: Tri2016??}, we prove some results on the existence and decay properties of high order derivatives in time and space variables for local and global solutions of the Cauchy problem for NSE in Bessel-potential spaces. In \cite{N. M. Tri: Tri2016?}, we prove some results on the existence and  space-time decay rates of global strong solutions of the Cauchy problem for NSE equations in weighed $L^\infty(\mathbb R^d,|x|^\beta{\rm dx})$ spaces. In  \cite{N. M. Tri: Tri2014???}, we considered the  initial value problem for the non stationary NSE on torus $\mathbb T^3=\mathbb R^3/\mathbb Z^3$ and showed that NSE are well-posed when  the initial datum belongs to Sobolev spaces $V_{\alpha}: = D(-\Delta)^{\alpha /2}$ with $\frac{1}{2} < \alpha <\frac{3}{2}$. In this paper, we construct mild solutions in the spaces $C([0, T);\dot{H}^s_p(\mathbb{R}^d))$ to the Cauchy problem for NSE when  the initial datum belongs to the Sobolev spaces $\dot{H}^s_p(\mathbb{R}^d)$, with $d \geq 2, p > \frac{d}{2},\ {\rm and}\ \frac{d}{p} 
- 1 \leq s  < \frac{d}{2p}$. We obtain the existence of mild solutions with arbitrary initial value when $T$ is small enough;  and existence 
of mild solutions for any $T < +\infty$ when the norm of the initial value in the Besov spaces $\dot{B}^{s- d(\frac{1}{p} - \frac{1}{\tilde q}), 
\infty}_{\tilde q}$, $(\tilde q > {\rm max}\{p, q\},\ {\rm where}\ \frac{1}{q} = \frac{1}{p} - \frac{s}{d})$ is small enough. 
In the case $p > d$ and $s = 0$, this result is stronger than that of Cannone and Meyer \cite{M. Cannone 1995,M. Cannone. Y. Meyer 1995} but under a weaker condition on the initial data. In the case of critical indexes $(p > \frac{d}{2}, s = \frac{d}{p} - 1)$, we can take $T = \infty$ when the norm of the initial value in the Besov spaces 
$\dot{B}^{\frac{d}{\tilde q} - 1, \infty}_{\tilde q}(\mathbb{R}^d), (\tilde q > {\rm max}\{d, p\})$ is small enough. This result when 
$s=0$ and $p = d$ is the theorem of \linebreak Cannone \cite{M. Cannone 1997}. The content of this paper is as follows: in Section 2, we state our main theorem after introducing some notations. In Section 3, we first establish some estimates concerning the heat semigroup with differential. We also recall some auxiliary lemmas and several estimates in the homogeneous Sobolev spaces and Besov spaces. Finally, in Section 4, we will give the proof of the main theorem.
\section{Statement of the results}
Now, for $T > 0$, we say that $u$ is a mild solution of NSE on $[0, T]$ 
corresponding to a divergence-free initial datum $u_0$ 
when $u$ solves the integral equation
$$
u = e^{t\Delta}u_0 - \int_{0}^{t} e^{(t-\tau) \Delta} \mathbb{P} 
\nabla\cdot\big(u(\tau,.)\otimes u(\tau,.)\big) \dif\tau.
$$
Above we have used the following notation: 
for a tensor $F = (F_{ij})$ 
we define the vector $\nabla\cdot F$ by 
$(\nabla\cdot F)_i = \sum_{j = 1}^d\partial_jF_{ij}$ 
and for two vectors $u$ and $v$, we define their tensor 
product $(u \otimes v)_{ij} = u_iv_j$. 
The operator $\mathbb{P}$ is the Helmholtz-Leray 
projection onto the divergence-free fields 
$$
(\mathbb{P}f)_j =  f_j + \sum_{1 \leq k \leq d} R_jR_kf_k, 
$$
where $R_j$ is the Riesz transforms defined as 
$$
R_j = \frac{\partial_j}{\sqrt{- \Delta}}\ \ {\rm i.e.} \ \  
\widehat{R_jg}(\xi) = \frac{i\xi_j}{|\xi|}\hat{g}(\xi).
$$
The heat kernel $e^{t\Delta}$ is defined as 
$$
e^{t\Delta}u(x) = ((4\pi t)^{-d/2}e^{-|.|^2/4t}*u)(x).
$$
For a space of functions defined on $\mathbb R^d$, 
say $E(\mathbb R^d)$, we will abbreviate it as $E$. We denote by $L^q: = L^q(\mathbb R^d)$ the usual 
Lebesgue space for $q \in [1, \infty]$ with the norm $\|.\|_{L^q}$, 
and we do not distinguish between the vector-valued and scalar-valued spaces of functions. Given a Banach space $E$ with norm $\|.\|_E$, we denote by $L^p([0,T],E), 1\leq p \leq +\infty$, set of functions $f(t)$ defined on $(0,T)$ with values in $E$ such that $\int_0^T\|f(t)\|^p_E{\rm d}t <+\infty$. Let $BC([0,T);E)$ denote the bounded continuous functions defined on $(0,T)$. For vector-valued $f=(f_1,...,f_M)$, we define $\|f\|_E = \big(\sum_{m=1}^{m=M}\|f_m\|^2_E\big)^{\frac{1}{2}}$. We define the Sobolev space by $\dot{H}^s_q: = \dot{\Lambda}^{-s}L^q$ 
equipped with the norm 
$\big\|f\big\|_{\dot{H}^s_q}: =  \|\dot{\Lambda}^sf\|_{L^q}$. 
Here $\dot{\Lambda}^s:= \mathcal F^{-1}|\xi|^s\mathcal F$, 
where $\mathcal F$ and $\mathcal F^{-1}$ are the Fourier transform and its inverse, respectively. $\dot{\Lambda} = \sqrt{-\Delta}$ is the homogeneous Calderon pseudo-differential operator. Throughout the paper, we sometimes use the notation $A \lesssim B$ as an equivalent to $A \leq CB$ with a uniform constant $C$. The notation $A \simeq B$ means that $A \lesssim B$ and $B \lesssim A$. Now we can state our results
\begin{theorem}\label{th1} Let $p$ and $s$ be such that
$$
p > \frac{d}{2}\ and\ \frac{d}{p} - 1 \leq s  < \frac{d}{2p}.
$$
Set
$$
\frac{1}{q} = \frac{1}{p} - \frac{s}{d}.
$$
{\rm (a)}  For all $\tilde q  > {\rm max}\{p, q\}$, 
there exists a positive constant $\delta_{q,\tilde q,d}$ 
such that for all $T > 0$ and for all 
 $u_0 \in \dot{H}^s_p(\mathbb{R}^d)\ with\ {\rm div}(u_0) = 0$ satisfying
\begin{equation}\label{eq2}
T^{\frac{1}{2}(1+s-\frac{d}{p})}\underset{0 < t < T}{{\rm sup}}
t^{\frac{d}{2}(\frac{1}{p}- \frac{s}{d} - \frac{1}{\tilde q})}
\big\|\me^{t\Delta}u_0\big\|_{L^{\tilde q}} \leq \delta_{q,\tilde q,d},
\end{equation}
NSE has a unique mild solution $u \in BC([0, T); \dot{H}^s_p)$. Moreover, we have
$$
t^{\frac{d}{2}(\frac{1}{q}-\frac{1}{r})}u(t) \in  BC([0,T);L^r), \ for\ all\ r > {\rm max}\{p, q\}.
$$
In particular, the condition \eqref{eq2} holds for arbitrary  $u_0 \in \dot{H}^s_p(\mathbb{R}^d)$ when  $T=T(u_0)$ is small enough.\\
{\rm (b)} If $s = \frac{d}{p} - 1$ then for all 
$\tilde q  > {\rm max}\{p,d\}$ there exists a constant 
$\sigma_{\tilde q, d}>0$ such that if  
$\big\|u_0\big\|_{\dot{B}^{\frac{d}{\tilde q} - 1, \infty}_{\tilde q}}
 \leq \sigma_{\tilde q,d}$ and $T = +\infty$ 
then the condition \eqref{eq2} holds.
\end{theorem}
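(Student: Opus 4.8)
The plan is to read the integral equation as a fixed point problem $u=e^{t\Delta}u_{0}-B(u,u)$ with the bilinear operator $B(u,v)(t)=\int_{0}^{t}e^{(t-\tau)\Delta}\mathbb{P}\nabla\cdot\big(u(\tau)\otimes v(\tau)\big)\,\dif\tau$, and to solve it by the classical Picard principle: on a Banach space $E$ on which $B$ is bounded, $\|B(u,v)\|_{E}\le C_{*}\|u\|_{E}\|v\|_{E}$, the map $u\mapsto e^{t\Delta}u_{0}-B(u,u)$ has a unique small fixed point as soon as $4C_{*}\|e^{t\Delta}u_{0}\|_{E}<1$. I would take $E=X_{T}$, the Kato-type space normed by $\|u\|_{X_{T}}=\sup_{0<t<T}t^{\frac{d}{2}(\frac{1}{q}-\frac{1}{\tilde q})}\|u(t)\|_{L^{\tilde q}}$, the exponent being dictated by the scaling attached to $\dot H^{s}_{p}$ through $\frac1q=\frac1p-\frac sd$. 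A first useful remark is that throughout the admissible range $\frac dp-1\le s<\frac{d}{2p}$ one has $q\ge d$, with $q=d$ exactly at the critical index $s=\frac dp-1$; this is what will make all the time integrals below converge.

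The core of part~(a) is the bilinear estimate in $X_{T}$. I would combine the smoothing bound for the composed operator, $\|e^{(t-\tau)\Delta}\mathbb{P}\nabla\cdot F\|_{L^{\tilde q}}\lesssim(t-\tau)^{-\frac12-\frac{d}{2\tilde q}}\|F\|_{L^{\tilde q/2}}$ (one of the Section~3 heat-semigroup estimates), with H\"older's inequality $\|(u\otimes v)(\tau)\|_{L^{\tilde q/2}}\le\|u(\tau)\|_{L^{\tilde q}}\|v(\tau)\|_{L^{\tilde q}}$. Inserting the $X_{T}$-weights and rescaling $\tau=t\sigma$ turns the time integral into a Beta integral $\int_{0}^{1}(1-\sigma)^{-\frac12-\frac{d}{2\tilde q}}\sigma^{-d(\frac1q-\frac1{\tilde q})}\,\dif\sigma$, which converges at both endpoints: since $q\ge d$, the hypothesis $\tilde q>\max\{p,q\}$ already yields $\tilde q>d$ (needed near $\sigma=1$) and $d(\frac1q-\frac1{\tilde q})<1$ (needed near $\sigma=0$). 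A short computation shows the residual power of $t$, after restoring the target weight, collapses to $t^{\frac12(1+s-\frac dp)}$, whence $\|B(u,v)\|_{X_{T}}\le C_{*}\,T^{\frac12(1+s-\frac dp)}\|u\|_{X_{T}}\|v\|_{X_{T}}$; the hypothesis $s\ge\frac dp-1$ keeps this $T$-power nonnegative. Consequently the contraction threshold $4C_{*}T^{\frac12(1+s-\frac dp)}\|e^{t\Delta}u_{0}\|_{X_{T}}<1$ is exactly condition~\eqref{eq2} with $\delta_{q,\tilde q,d}=1/(4C_{*})$, and the fixed point is obtained. The assertions $u\in BC([0,T);\dot H^{s}_{p})$ and $t^{\frac{d}{2}(\frac1q-\frac1r)}u\in BC([0,T);L^{r})$ I would recover by rerunning the very same bilinear estimate in these companion norms and checking strong continuity at $t=0$ on a dense subclass.

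For the last two assertions I would invoke the heat characterisation of homogeneous Besov spaces, $\sup_{t>0}t^{\alpha/2}\|e^{t\Delta}f\|_{L^{\tilde q}}\simeq\|f\|_{\dot B^{-\alpha,\infty}_{\tilde q}}$ for $\alpha>0$, together with the Sobolev embedding $\dot H^{s}_{p}\hookrightarrow\dot B^{s-d(\frac1p-\frac1{\tilde q}),\infty}_{\tilde q}$ valid for $\tilde q>p$. Writing the weight as $\frac{d}{2}(\frac1q-\frac1{\tilde q})=\frac12\big(d(\frac1p-\frac1{\tilde q})-s\big)$ and noting $d(\frac1p-\frac1{\tilde q})>s$ (from $\tilde q>q$) identifies $\sup_{0<t<\infty}t^{\frac{d}{2}(\frac1q-\frac1{\tilde q})}\|e^{t\Delta}u_{0}\|_{L^{\tilde q}}$ with a constant times $\|u_{0}\|_{\dot B^{s-d(\frac1p-\frac1{\tilde q}),\infty}_{\tilde q}}\lesssim\|u_{0}\|_{\dot H^{s}_{p}}$, which is finite; since $s>\frac dp-1$ makes the prefactor $T^{\frac12(1+s-\frac dp)}\to0$ as $T\to0$, the left side of~\eqref{eq2} tends to $0$ and the ``in particular'' statement follows. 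For part~(b) set $s=\frac dp-1$: then $\frac1q=\frac1d$, the prefactor equals $T^{0}=1$, and with $T=+\infty$ the left side of~\eqref{eq2} becomes $\sup_{t>0}t^{\frac12(1-\frac{d}{\tilde q})}\|e^{t\Delta}u_{0}\|_{L^{\tilde q}}$, which the same characterisation (legitimate because $\tilde q>d$) equals $C_{\tilde q,d}\|u_{0}\|_{\dot B^{\frac{d}{\tilde q}-1,\infty}_{\tilde q}}$. Choosing $\sigma_{\tilde q,d}=\delta_{d,\tilde q,d}/C_{\tilde q,d}$ then forces~\eqref{eq2}, which is exactly the claim.

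The step I expect to be the main obstacle is the bilinear estimate, and in particular the index bookkeeping that must simultaneously keep the Beta integral convergent at both endpoints, match the leftover time-power to the prefactor $T^{\frac12(1+s-\frac dp)}$, and respect the embeddings used for the data; the clean outcome $\tilde q>\max\{p,q\}$ hinges on the otherwise innocuous fact that $q\ge d$ on the whole parameter range. The remaining difficulty is purely technical, namely establishing strong continuity of the mild solution into $\dot H^{s}_{p}$ up to $t=0$, which requires the usual density argument rather than any new estimate.
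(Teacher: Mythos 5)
Your skeleton coincides with the paper's: a Picard fixed point in the Kato space $\mathcal{K}^{\tilde q}_{q,T}$ (your $X_T$), the bilinear estimate obtained from the kernel bound for $e^{(t-\tau)\Delta}\mathbb{P}\nabla$ via Young and H\"older plus a Beta integral, and the heat characterization of negative-order Besov norms for part (b). However, two steps you dismiss as routine are genuine gaps. The first is the passage from $u\in X_T$ (for the \emph{single} $\tilde q$ appearing in hypothesis \eqref{eq2}) to the assertions $t^{\frac{d}{2}(\frac1q-\frac1r)}u\in BC([0,T);L^r)$ for \emph{all} $r>\max\{p,q\}$ and $u\in BC([0,T);\dot H^s_p)$. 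This cannot be done by ``rerunning the very same bilinear estimate in these companion norms'': the one-step estimate from $\mathcal{K}^{\tilde q}_{q,T}\times\mathcal{K}^{\tilde q}_{q,T}$ into the $L^r$-based Kato space requires $r\ge\tilde q/2$ (from Young's inequality applied to $\|u\otimes v\|_{L^{\tilde q/2}}$) and $d\big(\frac{2}{\tilde q}-\frac1r\big)<1$ (convergence of the time integral at $\tau=t$), while the estimate into $\dot H^s_p$ requires $\tilde q\le 2p$. Thus when $\tilde q$ is close to $q$ (in particular in the critical case $q=d$) large $r$ is unreachable in one step, and when $\tilde q>2p$ the target $\dot H^s_p$ is unreachable in one step. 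The paper devotes Lemma \ref{lem8} and the three-case iteration in the proof of Theorem \ref{th1}(a) precisely to this point: it bootstraps membership $u\in\mathcal{K}^{\tilde q_0}_{q,T}\Rightarrow u\in\mathcal{K}^{\tilde q_1}_{q,T}\Rightarrow\cdots$ along an explicitly constructed chain of exponents until every $r>\max\{p,q\}$ is covered, and only then applies the $\mathcal{K}\times\mathcal{K}\to\mathcal{N}^s_{p,T}$ estimate (Lemma \ref{lem6}, valid only for exponents $\le 2p$) to obtain continuity with values in $\dot H^s_p$. Without some such iteration your argument proves the theorem only for the restricted range of $\tilde q$ where the one-step indices happen to be compatible.

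The second gap is in the ``in particular'' statement. Your argument — let the prefactor $T^{\frac12(1+s-\frac dp)}$ tend to $0$ while bounding $\sup_{0<t<\infty}t^{\alpha/2}\|e^{t\Delta}u_0\|_{L^{\tilde q}}$ by a Besov norm of $u_0$ — works only for $s>\frac dp-1$, and you say so yourself. But part (a) includes the critical index $s=\frac dp-1$, which is exactly the case needed for Proposition \ref{pro1} and for part (b); there the prefactor is identically $1$, and smallness for small $T$ must come from $\lim_{T\to0}\sup_{0<t<T}t^{\alpha/2}\|e^{t\Delta}u_0\|_{L^{\tilde q}}=0$. This vanishing is a property of $\dot H^s_p$ data and \emph{not} a consequence of finiteness of the Besov norm: for data that are homogeneous of the critical degree the quantity $t^{\alpha/2}\|e^{t\Delta}u_0\|_{L^{\tilde q}}$ is constant in $t$, so no smallness appears as $T\to0$. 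The paper proves the vanishing in Lemma \ref{lem5} by a truncation/density argument (splitting $u_0$, or $\dot\Lambda^su_0$, into a compactly supported bounded piece and a piece with small $L^q$ or $L^p$ norm); this is the ingredient that yields local existence for \emph{arbitrary large} critical data, and it is missing from your proposal.
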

In the case of critical indexes $(s = \frac{d}{p} - 1, p > \frac{d}{2})$, we obtain the following consequence.
\begin{proposition}\label{pro1} Let $ p > \frac{d}{2}$. 
Then for any $\tilde q >{\rm max}\{p, d\}$, there exists a  positive 
constant $\delta_{\tilde q,d}$ such that for all $T > 0$ and 
for all $u_0 \in \dot{H}^{\frac{d}{p} - 1}_p(\mathbb{R}^d)$ 
with \linebreak ${\rm div}(u_0) = 0$ satisfying
\begin{equation}\label{eq3}
\underset{0 < t < T}{{\rm sup}}t^{\frac{1}{2}(1 - \frac{d}{\tilde q})}
\big\|\me^{t\Delta}u_0\big\|_{L^{\tilde q}} \leq \delta_{{\tilde q},d},
\end{equation}
NSE has a unique mild solution $u \in BC([0, T); \dot{H}^{\frac{d}{p}-1}_p)$. Moreover, we have
$$
t^{\frac{d}{2}(\frac{1}{d}-\frac{1}{r})}u(t) \in  BC([0,T);L^r), \ for\ all\ r > {\rm max}\{p, d\}.
$$
Denoting $w = u - \me^{t\Delta}u_0$ then  
$w \in BC([0, T); \dot{H}^{\frac{d}{\tilde p}-1}_{\tilde p})$ for all $\tilde p >  \frac{1}{2}{\rm max}\{p, d\}$.\\
In particular, the condition \eqref{eq3} holds for arbitrary  
$u_0 \in \dot{H}^{\frac{d}{p} - 1}_p(\mathbb{R}^d)$ when \linebreak $T=T(u_0)$ is small enough, and there exists a positive constant 
$\sigma_{\tilde q,d}$ such that \linebreak 
if  
$$
\big\|u_0\big\|_{\dot{B}^{\frac{d}{\tilde q} - 1, \infty}_{\tilde q}}
 \leq \sigma_{\tilde q,d}\  and\ T = +\infty,
$$
 then the condition \eqref{eq3} holds.
\end{proposition}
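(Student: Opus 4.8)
The plan is to derive the Proposition from Theorem~\ref{th1} by specializing to the critical exponent $s=\frac{d}{p}-1$, and then to add the extra regularity statement for $w=u-\me^{t\Delta}u_0$ by a direct bilinear estimate. First I would record the arithmetic consequences of $s=\frac{d}{p}-1$: the auxiliary exponent becomes $\frac1q=\frac1p-\frac sd=\frac1d$, so $q=d$ and $\max\{p,q\}=\max\{p,d\}$; the weight exponent simplifies as $\frac d2\big(\frac1p-\frac sd-\frac1{\tilde q}\big)=\frac d2\big(\frac1d-\frac1{\tilde q}\big)=\frac12\big(1-\frac d{\tilde q}\big)$; and, crucially, the prefactor $T^{\frac12(1+s-\frac dp)}=T^{0}=1$. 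Hence condition \eqref{eq2} collapses exactly to condition \eqref{eq3}, and Theorem~\ref{th1}(a) immediately yields the existence and uniqueness of $u\in BC([0,T);\dot H^{\frac dp-1}_p)$ together with the decay property $t^{\frac d2(\frac1d-\frac1r)}u(t)\in BC([0,T);L^r)$ for every $r>\max\{p,d\}$. The two ``in particular'' clauses are likewise inherited: the small-time solvability for arbitrary data is the corresponding clause of Theorem~\ref{th1}(a), while the global ($T=+\infty$) solvability under a small Besov norm $\|u_0\|_{\dot B^{\frac d{\tilde q}-1,\infty}_{\tilde q}}\le\sigma_{\tilde q,d}$ is exactly Theorem~\ref{th1}(b), both read off once \eqref{eq2} is identified with \eqref{eq3}.

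It remains to prove the genuinely new assertion $w\in BC([0,T);\dot H^{\frac d{\tilde p}-1}_{\tilde p})$ for every $\tilde p>\frac12\max\{p,d\}$. Writing $w(t)=-\int_0^t \me^{(t-\tau)\Delta}\mathbb{P}\nabla\cdot\big(u(\tau)\otimes u(\tau)\big)\dif\tau$, I would estimate in $\dot H^{\frac d{\tilde p}-1}_{\tilde p}=\dot\Lambda^{-(\frac d{\tilde p}-1)}L^{\tilde p}$ by choosing the auxiliary Lebesgue exponent $r=2\tilde p$, which is admissible since $r=2\tilde p>\max\{p,d\}$ is precisely the hypothesis on $\tilde p$. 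The key ingredient is the heat-semigroup-with-derivative bound from Section~3: the Fourier multiplier $\dot\Lambda^{\frac d{\tilde p}-1}\me^{\sigma\Delta}\mathbb{P}\nabla\cdot$ has total homogeneity $\frac d{\tilde p}$ in $\xi$, so a scaling/Young argument gives the $L^{\tilde p}\to L^{\tilde p}$ operator bound $\lesssim \sigma^{-\frac d{2\tilde p}}$. Combined with $\|u(\tau)\otimes u(\tau)\|_{L^{\tilde p}}\lesssim\|u(\tau)\|_{L^{2\tilde p}}^2\lesssim \tau^{-(1-\frac d{2\tilde p})}$, this leads to
\begin{equation}\label{eqP}
\big\|w(t)\big\|_{\dot H^{\frac d{\tilde p}-1}_{\tilde p}}\lesssim \int_0^t (t-\tau)^{-\frac d{2\tilde p}}\,\tau^{-(1-\frac d{2\tilde p})}\dif\tau = B\Big(1-\tfrac d{2\tilde p},\tfrac d{2\tilde p}\Big),
\end{equation}
a finite constant independent of $t$, the Beta integral converging because $\frac d{2\tilde p}<1$ (equivalently $\tilde p>\frac d2$, which holds as $\tilde p>\frac12\max\{p,d\}\ge\frac d2$) and its total $t$-power being $1-\frac d{2\tilde p}-(1-\frac d{2\tilde p})=0$. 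This gives the required uniform bound; the strong continuity of $t\mapsto w(t)$ on $(0,T)$ is then obtained by the same continuity-of-the-bilinear-term argument used in the proof of Theorem~\ref{th1} (splitting the integral and invoking dominated convergence).

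The routine parts are the specialization arithmetic and the multiplier/Young estimates, which are already available from Section~3. The main obstacle is to make the exponent bookkeeping in \eqref{eqP} line up so that the Beta integral both converges and carries zero power of $t$ throughout the admissible range $\tilde p>\frac12\max\{p,d\}$; the choice $r=2\tilde p$ is what makes the two exponents $\frac d{2\tilde p}$ and $1-\frac d{2\tilde p}$ add to $1$, and one must check that this $r$ stays within the range $r>\max\{p,d\}$ where the decay estimate of part~(a) is valid, which is exactly guaranteed by the hypothesis on $\tilde p$.
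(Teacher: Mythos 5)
Your proposal is correct and follows essentially the same route as the paper: specialize Theorem~\ref{th1} to $s=\frac{d}{p}-1$ (so $q=d$ and the prefactor $T^{\frac12(1+s-\frac dp)}$ becomes $1$), then treat $w=-B(u,u)$ using $u\in\mathcal{K}^{2\tilde p}_{d,T}$ from part~(a). The only difference is cosmetic: the paper obtains the bound on $w$ by citing Lemma~\ref{lem6} with exponents $(p,s,q,\tilde q)=(\tilde p,\frac{d}{\tilde p}-1,d,r)$, $d<r\le 2\tilde p$, whereas you inline the proof of exactly that special case (kernel scaling from Lemma~\ref{lem1}, H\"older with $r=2\tilde p$, and the Beta integral of Lemma~\ref{lem4}).
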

\begin{remark} If $p = d$ then Proposition \ref{pro1} is the theorem of Canone \cite{M. Cannone 1997}.
\end{remark}
In the case of supercritical indexes $p > \frac{d}{2}\ \text{and}\ \frac{d}{p} - 1 < s  < \frac{d}{2p}$, we get the following consequence.
\begin{proposition}\label{pro2}
Let $p > \frac{d}{2}\ \text{and}\ \frac{d}{p} - 1 < s  < \frac{d}{2p}$. 
Then for any $\tilde q$ be such that $\tilde q >{\rm max} \{p, q\},
$
where
$$
\frac{1}{q} = \frac{1}{p} - \frac{s}{d},
$$
there exists a positive constant $\delta_{q,\tilde q,d}$ 
such that for all $T > 0$ and for all \linebreak
 $u_0 \in \dot{H}^s_p(\mathbb{R}^d)\ with\ {\rm div}(u_0) = 0$ satisfying
\begin{equation}\label{eq4}
T^{\frac{1}{2}(1+s-\frac{d}{p})}\big\|u_0\big\|_{\dot{B}^{s- (\frac{d}{p} 
- \frac{d}{\tilde q}), \infty}_{\tilde q}} \leq \delta_{q,\tilde q,d},
\end{equation}
NSE has a unique mild solution $u \in BC([0, T); \dot{H}^s_p)$. Moreover, we have
$$
t^{\frac{d}{2}(\frac{1}{q}-\frac{1}{r})}u(t) \in  BC([0,T);L^r), \ for\ all\ r > {\rm max}\{p, q\}.
$$
\end{proposition}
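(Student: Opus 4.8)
The plan is to obtain Proposition \ref{pro2} as a direct corollary of part (a) of Theorem \ref{th1}: I will show that, with a suitable choice of the constant $\delta_{q,\tilde q,d}$, the hypothesis \eqref{eq4} is \emph{stronger} than the hypothesis \eqref{eq2}, so that the entire conclusion (existence, uniqueness in $BC([0,T);\dot H^s_p)$, and the weighted $L^r$ bound) is inherited verbatim. The only analytic input is the thermic description of homogeneous Besov spaces of negative order, which is among the estimates collected in Section 3: for every $\tilde q\in[1,\infty]$ and every $\sigma<0$ one has
$$
\big\|f\big\|_{\dot B^{\sigma,\infty}_{\tilde q}}\simeq \sup_{t>0}\, t^{-\sigma/2}\big\|\me^{t\Delta}f\big\|_{L^{\tilde q}} .
$$
This is exactly the tool that converts the heat-flow quantity appearing in \eqref{eq2} into a norm read off the datum.

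First I would match the exponents. With $\sigma=s-\big(\frac dp-\frac d{\tilde q}\big)$ and $\frac1q=\frac1p-\frac sd$, a one-line computation gives
$$
-\frac{\sigma}{2}=\frac12\Big(\frac dp-s-\frac d{\tilde q}\Big)=\frac d2\Big(\frac1q-\frac1{\tilde q}\Big)=\frac d2\Big(\frac1p-\frac sd-\frac1{\tilde q}\Big),
$$
which is precisely the power of $t$ weighting $\|\me^{t\Delta}u_0\|_{L^{\tilde q}}$ in \eqref{eq2}. Moreover $\sigma<0$ is equivalent to $\tilde q>q$, which is part of the standing hypothesis $\tilde q>\max\{p,q\}$; this is the only place where that assumption is used, and it is what makes the Besov characterization above applicable.

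Next I would pass from the full supremum to the truncated one. Since trivially $\sup_{0<t<T}\le\sup_{t>0}$, the characterization furnishes a constant $C=C_{q,\tilde q,d}$, uniform in $T>0$, with
$$
\sup_{0<t<T}\, t^{\frac d2(\frac1q-\frac1{\tilde q})}\big\|\me^{t\Delta}u_0\big\|_{L^{\tilde q}}\le C\big\|u_0\big\|_{\dot B^{s-(\frac dp-\frac d{\tilde q}),\infty}_{\tilde q}} .
$$
Multiplying by $T^{\frac12(1+s-\frac dp)}$ shows that if one defines the constant of Proposition \ref{pro2} by $\delta_{q,\tilde q,d}:=\delta^{\ast}_{q,\tilde q,d}/C$, where $\delta^{\ast}_{q,\tilde q,d}$ is the constant produced by Theorem \ref{th1}(a), then \eqref{eq4} forces \eqref{eq2}. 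Applying Theorem \ref{th1}(a) then delivers the unique mild solution $u\in BC([0,T);\dot H^s_p)$ together with the bound $t^{\frac d2(\frac1q-\frac1r)}u(t)\in BC([0,T);L^r)$ for all $r>\max\{p,q\}$, which is the full assertion.

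Since the whole argument is a comparison of two norms, there is essentially no obstacle beyond having the thermic characterization at hand; the one point that must be verified carefully is the sign condition $\sigma<0$ (equivalently $\tilde q>q$), without which the heat-semigroup quantity would not be comparable to the Besov norm. I would also observe that the strict supercriticality $\frac dp-1<s$ makes the exponent $\tfrac12\big(1+s-\frac dp\big)$ strictly positive, so that $T^{\frac12(1+s-\frac dp)}$ degenerates as $T\to0$; this is why \eqref{eq4} can be met for small $T$ and arbitrary data, in contrast to the critical case $s=\frac dp-1$ of Proposition \ref{pro1}, where this $T$-factor is absent and global-in-time small-data solutions appear instead.
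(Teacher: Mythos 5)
Your proposal is correct and follows essentially the same route as the paper: the paper also deduces condition \eqref{eq4} $\Rightarrow$ condition \eqref{eq2} from the heat-semigroup (thermic) characterization of the negative-order Besov norm (Lemma \ref{lem3}) and then invokes Theorem \ref{th1}(a). Your write-up is in fact more careful than the paper's, since you explicitly match the exponents and verify the sign condition $s-\bigl(\tfrac{d}{p}-\tfrac{d}{\tilde q}\bigr)<0$ (equivalently $\tilde q>q$) needed for Lemma \ref{lem3} to apply.
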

\begin{remark} Proposition \ref{pro2} is the theorem of Canone and Meyer \cite{M. Cannone 1995,M. Cannone. Y. Meyer 1995} if $s=0$, $p > d$, and the condition \eqref{eq4} is replaced by the condition
\begin{equation*}
T^{\frac{1}{2}(1-\frac{d}{p})}\big\|u_0\big\|_{L^p} \leq \delta_{p,d}.
\end{equation*}
Note that in the case $s=0$ and $p > d$, the condition \eqref{eq4} is weaker than the above condition because of the following elementary imbedding maps
$$
L^p(\mathbb{R}^d) \hookrightarrow  \dot{B}^{-(
\frac{d}{p}-\frac{d}{\tilde q}), \infty}_{\tilde q}(\mathbb{R}^d), (\tilde q > p \geq d),
$$
but these two spaces are different. Indeed,  we have 
$\big|x\big|^{-\frac{d}{p}} \notin L^p(\mathbb{R}^d)$. 
On the other hand by using Lemma \ref{lem3}, we can 
easily prove that
$\big|x\big|^{-\frac{d}{p}} \in \dot{B}^{
-(\frac{d}{p}-\frac{d}{\tilde q}), \infty}_{\tilde q}(\mathbb{R}^d)$ 
for all $\tilde q > p$.
\end{remark}

\section{Tools from harmonic analysis}
In this section we prepare some auxiliary lemmas.\\
The main property we use throughout this paper is that the operator $\dot\Lambda^s e^{t\Delta}\mathbb{P}\nabla$ is a matrix of convolution operators with bounded integrable kernels.
\begin{lemma}\label{lem1}
Let $s>-1$. Then the kernel function of  $\dot{\Lambda}^se^{t\Delta} \mathbb{P}\nabla$ is the function
$$
K_t(x)= t^{-\frac{d+1+s}{2}}K\big(\frac{x}{\sqrt t}\big),
$$
where the function $K$ is the kernel function of  $\dot{\Lambda}^se^{\Delta} \mathbb{P}\nabla$ which satisfies the following inequality
$$
|K(x)| \lesssim \frac{1}{1+|x|^{d+1+s}}.
$$
\end{lemma}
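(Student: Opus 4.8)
The plan is to realize $\dot\Lambda^s e^{t\Delta}\mathbb{P}\nabla$ as a Fourier multiplier and to read off both assertions from its symbol. Using that $\dot\Lambda^s$ has symbol $|\xi|^s$, that $e^{t\Delta}$ has symbol $e^{-t|\xi|^2}$, that $\mathbb{P}$ has the matrix symbol $\big(\delta_{jk}-\xi_j\xi_k/|\xi|^2\big)_{j,k}$, and that $\partial_l$ has symbol $i\xi_l$, every scalar entry of the matrix operator $\dot\Lambda^s e^{t\Delta}\mathbb{P}\nabla$ is a Fourier multiplier with symbol
$$
m(t,\xi)=|\xi|^s e^{-t|\xi|^2}\, i\xi_l\Big(\delta_{jk}-\frac{\xi_j\xi_k}{|\xi|^2}\Big).
$$
First I would record the scaling. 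Substituting $\xi=\eta/\sqrt t$ and using that the projection factor is homogeneous of degree $0$ gives $m(t,\xi)=t^{-(s+1)/2}m(1,\sqrt t\,\xi)$; inserting this into $K_t=\mathcal F^{-1}[m(t,\cdot)]$ and changing variables $\xi=\eta/\sqrt t$ produces the Jacobian factor $t^{-d/2}$ together with the dilated argument $x/\sqrt t$, which yields exactly $K_t(x)=t^{-(d+1+s)/2}K(x/\sqrt t)$ with $K=\mathcal F^{-1}[m(1,\cdot)]$. This step is routine, and everything then reduces to the pointwise bound on the single kernel $K$.

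For the pointwise estimate I would write $m(1,\xi)=g(\xi)e^{-|\xi|^2}$, where $g(\xi)=i|\xi|^s\xi_l(\delta_{jk}-\xi_j\xi_k/|\xi|^2)$ is homogeneous of degree $s+1$ and smooth on $\mathbb R^d\setminus\{0\}$, so that $g(\xi)=|\xi|^{s+1}\omega(\xi/|\xi|)$ with $\omega\in C^\infty(S^{d-1})$. Since $s>-1$ we have $|m(1,\cdot)|\lesssim|\xi|^{s+1}e^{-|\xi|^2}\in L^1$, whence $K\in L^\infty$ and the claim is trivial on $|x|\le1$; it remains to prove $|K(x)|\lesssim|x|^{-(d+1+s)}$ for $|x|\ge1$. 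I would split $m(1,\cdot)=\chi\,m(1,\cdot)+(1-\chi)m(1,\cdot)$ with $\chi$ a smooth cut-off equal to $1$ near the origin and supported in the unit ball. The high-frequency piece $(1-\chi)m(1,\cdot)$ is a Schwartz function (smooth away from $0$, annihilated near $0$ by $1-\chi$, and with Gaussian decay), so its inverse transform decays faster than any power and is harmless.

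The heart of the matter, and the main obstacle, is the low-frequency piece, for $g$ fails to be smooth at the origin — both $|\xi|^s$ and the degree-zero projection factor are singular there — so a direct integration by parts cannot reach the sharp exponent $d+1+s$. I would resolve this by a dyadic decomposition $\chi\,m(1,\cdot)=\sum_{j\le0}\psi(2^{-j}\xi)g(\xi)e^{-|\xi|^2}$ with a Littlewood--Paley bump $\psi$ supported in $|\xi|\sim1$. Rescaling $\xi=2^j\eta$ exhibits the $j$-th block as $2^{j(s+1)}\Phi_j(2^{-j}\xi)$, where $\Phi_j(\eta)=\psi(\eta)g(\eta)e^{-2^{2j}|\eta|^2}$ is supported in a fixed annulus with Schwartz seminorms bounded uniformly in $j\le0$ (the restriction $j\le0$ is crucial, as it keeps the Gaussian and all its derivatives uniformly tame). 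Hence $K^{(j)}(x)=2^{j(d+s+1)}\Psi_j(2^jx)$ with $\Psi_j=\mathcal F^{-1}\Phi_j$ of uniform rapid decay, so $|K^{(j)}(x)|\lesssim_N 2^{j(d+s+1)}(1+2^j|x|)^{-N}$. Summing over $j\le0$ with $N>d+s+1$ and splitting at the scale $2^j\sim1/|x|$, both partial sums are geometric and dominated by the borderline term, giving $\sum_{j\le0}|K^{(j)}(x)|\lesssim|x|^{-(d+1+s)}$ for $|x|\ge1$. Combining the three contributions yields $|K(x)|\lesssim(1+|x|)^{-(d+1+s)}\simeq(1+|x|^{d+1+s})^{-1}$, which is the asserted bound.
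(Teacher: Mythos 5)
Your proof is correct, but it is worth pointing out that the paper does not actually contain a proof of this lemma: its entire argument is the citation ``See Proposition 11.1 in Lemari\'e-Rieusset, p.~107'', where the kernel of $\dot\Lambda^s e^{\Delta}\mathbb{P}\nabla$ (a variant of the Oseen kernel) is estimated. Your argument is therefore a genuinely different, self-contained route, and both halves of it are sound: the scaling identity follows from the homogeneity $m(t,\xi)=t^{-(s+1)/2}m(1,\sqrt{t}\,\xi)$ of the symbol exactly as you write, and your dyadic decomposition correctly isolates the only real difficulty, namely that the factor $|\xi|^s\xi_l\big(\delta_{jk}-\xi_j\xi_k/|\xi|^2\big)$ is homogeneous but \emph{not smooth} at $\xi=0$, so a crude integration by parts on the full symbol cannot produce the non-integer exponent $d+1+s$. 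The rescaled blocks $\Phi_j$ with seminorms uniform in $j\le 0$, followed by the geometric summation split at $2^j\simeq|x|^{-1}$, are the standard and correct mechanism for converting homogeneity of degree $s+1$ at the origin into decay $|x|^{-(d+1+s)}$ at infinity. Two cosmetic remarks: for the identity $\chi\, m(1,\cdot)=\sum_{j\le 0}\psi(2^{-j}\cdot)\,m(1,\cdot)$ to be literally true you should define $\chi:=\sum_{j\le 0}\psi(2^{-j}\cdot)$ (the Littlewood--Paley tail, which equals $1$ on a punctured neighborhood of the origin), rather than choosing an arbitrary bump first; and note that your pointwise estimate in fact only uses $d+1+s>0$, while the hypothesis $s>-1$ is what makes the resulting bound $\big(1+|x|^{d+1+s}\big)^{-1}$ \emph{integrable} --- which is how the paper actually uses the lemma (Young's inequality against $\|K\|_{L^r}$ in Lemmas \ref{lem6} and \ref{lem7}). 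As for what each approach buys: the paper's citation is economical and defers all technicalities to the reference, whereas your proof makes the lemma independent of the book and makes transparent where the exponent $d+1+s$ comes from (dimension plus the homogeneity degree $s+1$ of the singular factor).
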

\begin{proof} See Proposition 11.1 \cite{P. G. Lemarie-Rieusset 2002}, p. 107. \end{proof} 
\begin{lemma}\label{lem2} {\rm(Sobolev inequalities)}.\\
If \ $s_1 > s_2, \ 1 < q_1, \ q_2 < \infty$, and $s_1 - \frac{d}{q_1} = s_2 - \frac{d}{q_2}$, then we have the following embedding mapping
$$
\dot{H}_{q_1}^{s_1} \hookrightarrow \dot{H}_{q_2}^{s_2}.
$$
\end{lemma}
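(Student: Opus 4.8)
The plan is to reduce this embedding to a single application of the Hardy--Littlewood--Sobolev inequality for the Riesz potential. Set $\alpha := s_1 - s_2$, which is positive by hypothesis. For $f \in \dot{H}^{s_1}_{q_1}$ put $g := \dot{\Lambda}^{s_1} f \in L^{q_1}$; then $\dot{\Lambda}^{s_2} f = \dot{\Lambda}^{s_2 - s_1} g = \dot{\Lambda}^{-\alpha} g$, so the claimed bound $\|\dot{\Lambda}^{s_2} f\|_{L^{q_2}} \lesssim \|\dot{\Lambda}^{s_1} f\|_{L^{q_1}}$ is exactly the assertion that the operator $\dot{\Lambda}^{-\alpha}$ maps $L^{q_1}$ boundedly into $L^{q_2}$. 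Thus the whole statement is equivalent to the boundedness of one fixed convolution-type operator between two Lebesgue spaces.

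Next I would identify $\dot{\Lambda}^{-\alpha}$, which by the definition $\dot{\Lambda}^s = \mathcal{F}^{-1}|\xi|^s\mathcal{F}$ carries the Fourier multiplier $|\xi|^{-\alpha}$, with the Riesz potential $I_\alpha = (-\Delta)^{-\alpha/2}$, i.e.\ convolution against $c_{d,\alpha}|x|^{\alpha - d}$. For this kernel representation to be legitimate one needs $0 < \alpha < d$, so the first thing to check is that we lie in this range. The scaling relation $s_1 - \frac{d}{q_1} = s_2 - \frac{d}{q_2}$ rearranges to $\frac{1}{q_1} - \frac{1}{q_2} = \frac{\alpha}{d}$; since the left-hand side is positive and, because $q_1 > 1$, strictly smaller than $\frac{1}{q_1} < 1$, we obtain $0 < \alpha < d$ together with $q_2 > q_1$. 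Simultaneously this reading produces the exponent identity $\frac{1}{q_2} = \frac{1}{q_1} - \frac{\alpha}{d}$, which is precisely the balance hypothesis required by the Hardy--Littlewood--Sobolev theorem.

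Finally I would invoke that theorem: for $0 < \alpha < d$ and $1 < q_1 < q_2 < \infty$ with $\frac{1}{q_2} = \frac{1}{q_1} - \frac{\alpha}{d}$, the Riesz potential $I_\alpha$ is bounded from $L^{q_1}$ into $L^{q_2}$. Applying it to $g$ gives $\|\dot{\Lambda}^{s_2} f\|_{L^{q_2}} = \|I_\alpha g\|_{L^{q_2}} \lesssim \|g\|_{L^{q_1}} = \|\dot{\Lambda}^{s_1} f\|_{L^{q_1}}$, which is the desired embedding. The only genuinely delicate point, rather than the inequality itself, is the bookkeeping intrinsic to the homogeneous spaces: one must work modulo polynomials and run a density argument on a convenient dense class (for instance functions whose Fourier transform is compactly supported away from the origin) to justify that $\dot{\Lambda}^{s_2} f = I_\alpha(\dot{\Lambda}^{s_1} f)$ holds as an identity of tempered distributions and that the Riesz-potential representation is valid on that class. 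Once this identification is secured, the estimate is immediate from Hardy--Littlewood--Sobolev.
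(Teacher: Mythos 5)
Your proof is correct, but there is nothing in the paper to compare it against: the paper states this lemma as a classical fact (``Sobolev inequalities'') with no proof and no citation, unlike Lemmas \ref{lem1} and \ref{lem3}, whose proofs are delegated to Lemarie-Rieusset's book. What you have written is the standard argument, and every step checks out. Setting $\alpha = s_1 - s_2$, the scaling relation $s_1 - \frac{d}{q_1} = s_2 - \frac{d}{q_2}$ indeed rearranges to $\frac{1}{q_1} - \frac{1}{q_2} = \frac{\alpha}{d}$, from which $1 < q_1 < q_2 < \infty$ and $0 < \alpha < d$ follow exactly as you say; the hypothesis $q_2 < \infty$ is what keeps you away from the failing endpoint of Hardy--Littlewood--Sobolev; and the reduction of the embedding to the $L^{q_1} \to L^{q_2}$ boundedness of $I_\alpha = \dot{\Lambda}^{-\alpha}$ is immediate from the paper's definition $\dot{H}^s_q = \dot{\Lambda}^{-s}L^q$ with norm $\|\dot{\Lambda}^s f\|_{L^q}$. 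Your closing caveat --- that the identity $\dot{\Lambda}^{s_2} f = I_\alpha(\dot{\Lambda}^{s_1} f)$ and the kernel representation of $I_\alpha$ must be justified on a dense class (e.g.\ functions with Fourier support away from the origin) and interpreted modulo polynomials --- is precisely the technical point that needs care in the homogeneous setting, so flagging it is appropriate even if you do not carry it out in full. In short, you have supplied a complete, correct proof where the paper supplies none; since the paper uses the lemma only as a black box (in Lemma \ref{lem5}, to pass from $u_0 \in \dot{H}^s_p$ with $s \geq 0$ to $u_0 \in L^q$), your self-contained argument is strictly more than the paper offers.
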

In this paper we use the definition of the homogeneous 
Besov space  $\dot B^{s,p}_q$ in \linebreak \cite{G. Bourdaud 1993,G. Bourdaud 1988}. The following lemma will provide a different characterization 
of Besov spaces $\dot B^{s,p}_q$ in terms of the heat semigroup 
and will be one of the staple ingredients of the proof of Theorem \ref{th1}.
\begin{lemma}\label{lem3} \ \\
Let $1 \leq p, q \leq \infty$ and $ s < 0$. Then the two quantities
\begin{gather*}
\Big(\int_0^\infty(t^{-\frac{s}{2}}\big\|\me^{t\Delta}f\big\|_{L^q})^p
\frac{{\rm d}t}{t}\Big)^{\frac{1}{p}}\ and \ \big\|f\big\|_{\dot{B}_{q}^{s, p}} 
\ are \ equivalent.
\end{gather*}
\end{lemma}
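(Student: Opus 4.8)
The plan is to prove the two inequalities separately, working from the Littlewood--Paley definition of $\dot B^{s,p}_q$ taken from \cite{G. Bourdaud 1993,G. Bourdaud 1988}, namely $\|f\|_{\dot B^{s,p}_q}\simeq\big(\sum_{j\in\mathbb Z}(2^{js}\|\Delta_j f\|_{L^q})^p\big)^{1/p}$, where $\Delta_j$ is the homogeneous dyadic block with Fourier support in the annulus $\{|\xi|\simeq 2^j\}$. The two ingredients I would isolate first are: (i) a heat estimate on spectrally localized pieces, $\|\me^{t\Delta}\Delta_j f\|_{L^q}\lesssim\me^{-c\,2^{2j}t}\|\Delta_j f\|_{L^q}$, obtained by viewing $\me^{t\Delta}\Delta_j$ as a Fourier multiplier supported in the annulus and rescaling its kernel to a uniform $L^1$ bound; and (ii) a reproducing (inversion) formula recovering each block from the semigroup, $\|\Delta_j f\|_{L^q}\lesssim\|\me^{t\Delta}f\|_{L^q}$ uniformly for $t\simeq 2^{-2j}$, obtained by writing $\Delta_j=\psi_j(D)\,\me^{t\Delta}$ with $\psi_j(\xi)=\phi(2^{-j}\xi)\me^{t|\xi|^2}$ and checking that $\psi_j(D)$ is bounded on $L^q$ uniformly in $j$ when $t\simeq 2^{-2j}$.

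For the bound $\big(\int_0^\infty(t^{-s/2}\|\me^{t\Delta}f\|_{L^q})^p\frac{\dif t}{t}\big)^{1/p}\lesssim\|f\|_{\dot B^{s,p}_q}$, I would decompose $\me^{t\Delta}f=\sum_j\me^{t\Delta}\Delta_j f$, apply ingredient (i), and pass to the logarithmic time variable $t=2^{-2y}$ (so $\frac{\dif t}{t}=2\log 2\,\dif y$). In that variable the kernel $2^{-js}t^{-s/2}\me^{-c2^{2j}t}$ becomes $\Phi(y-j)$ with $\Phi(r)=2^{sr}\me^{-c2^{-2r}}$, so the quantity to estimate is the $L^p(\dif y)$ norm of the discrete convolution $\sum_j a_j\Phi(\cdot-j)$, where $a_j=2^{js}\|\Delta_j f\|_{L^q}$. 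A standard H\"older-plus-Young argument then gives $\lesssim C_\Phi\,\|(a_j)\|_{\ell^p}=C_\Phi\,\|f\|_{\dot B^{s,p}_q}$, with $C_\Phi$ finite precisely because $\Phi\in L^1$. Here the hypothesis $s<0$ enters decisively: it is exactly what makes $\Phi$ decay like $2^{sr}$ as $r\to+\infty$ and hence integrable, equivalently what makes the Gamma-type integral $\int_0^\infty(u^{-s/2}\me^{-cu})^p\frac{\dif u}{u}$ converge at $u=0$.

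For the reverse bound $\|f\|_{\dot B^{s,p}_q}\lesssim\big(\int_0^\infty(t^{-s/2}\|\me^{t\Delta}f\|_{L^q})^p\frac{\dif t}{t}\big)^{1/p}$, I would use ingredient (ii) but, rather than freezing $t=2^{-2j}$, keep $t$ ranging over the dyadic band $t\in[2^{-2j-1},2^{-2j}]$, on which $\|\Delta_j f\|_{L^q}\lesssim\|\me^{t\Delta}f\|_{L^q}$ holds uniformly and $t^{-s/2}\simeq 2^{js}$. Raising to the $p$-th power and integrating $\frac{\dif t}{t}$ over the band costs only a constant factor $\log 2$, so that $(2^{js}\|\Delta_j f\|_{L^q})^p\lesssim\int_{2^{-2j-1}}^{2^{-2j}}(t^{-s/2}\|\me^{t\Delta}f\|_{L^q})^p\frac{\dif t}{t}$; summing over $j$ and recombining the disjoint bands into $(0,\infty)$ closes the estimate. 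The cases $p=\infty$ or $q\in\{1,\infty\}$ are handled by the obvious $\sup$ versions of these steps.

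I expect the main obstacle to lie in ingredient (ii): establishing the inversion estimate $\|\Delta_j f\|_{L^q}\lesssim\|\me^{t\Delta}f\|_{L^q}$ with a constant uniform in $j$ (for $t\simeq 2^{-2j}$) requires controlling the $L^q\to L^q$ norm of the multipliers $\phi(2^{-j}\xi)\me^{t|\xi|^2}$, i.e. checking that their convolution kernels have $L^1$ norms bounded independently of $j$. The clean way is to exploit the dyadic scaling relating the blocks $\Delta_j$ to $\Delta_0$ so as to reduce everything to the single band $j=0$, $t\simeq 1$, where the multiplier is a fixed smooth compactly supported function of $\xi$ and the kernel bound is immediate; rescaling then preserves the $L^1$ kernel norm. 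Ingredient (i) is comparatively routine, being the standard heat-on-annulus estimate.
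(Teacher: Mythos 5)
The paper does not actually prove this lemma: its ``proof'' is a one-line citation to Theorem 5.4 of Lemari\'e-Rieusset (\cite{P. G. Lemarie-Rieusset 2002}, p.~45), so there is no in-paper argument to compare against. Your proof is correct, and it is in substance the standard proof of that cited theorem: the forward bound via the heat-on-annulus estimate $\|\me^{t\Delta}\Delta_j f\|_{L^q}\lesssim \me^{-c2^{2j}t}\|\Delta_j f\|_{L^q}$ combined with a discrete--continuous H\"older/Young convolution estimate in the variable $y$ with $t=2^{-2y}$ (where $s<0$ is precisely what makes $\Phi(r)=2^{sr}\me^{-c2^{-2r}}$ integrable as $r\to+\infty$), and the reverse bound via the uniformly $L^q$-bounded multipliers $\phi(2^{-j}\xi)\me^{t|\xi|^2}$ with $t\simeq 2^{-2j}$, integrated over disjoint dyadic time bands. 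Two small points would make it airtight. First, in the H\"older-plus-Young step the constant you need is $\sup_y\sum_j\Phi(y-j)$ together with $\int_{\mathbb R}\Phi(r)\,{\rm d}r$; both are finite thanks to the two-sided decay of $\Phi$ (exponential decay $2^{sr}$ as $r\to+\infty$ since $s<0$, double-exponential decay as $r\to-\infty$), and it is worth saying so explicitly since this is where the argument would break for $s\geq 0$. Second, since $\dot B^{s,p}_q$ consists of distributions modulo polynomials, you should note that for $s<0$ the series $\sum_j\Delta_j f$ converges in $\mathcal S'$ and fixes the canonical realization of $f$ on which $\me^{t\Delta}f$ is unambiguously defined; this is consistent with the Bourdaud realization the paper adopts for its definition of $\dot B^{s,p}_q$, and it is a second, quieter place where $s<0$ is used.
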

\begin{proof} See Theorem 5.4 in (\cite{P. G. Lemarie-Rieusset 2002}, p. 45). \end{proof}
\begin{lemma}\label{lem4} Let $\theta< 1$ and $\gamma < 1$ then
$$
\int^t_0(t-\tau)^{-\gamma} \tau^{-\theta}{\rm d}\tau = C t^{1-\gamma - \theta},\  where\  
C = \int^1_0(1-\tau)^{-\gamma} \tau^{-\theta}{\rm d}\tau < \infty.
$$
\end{lemma}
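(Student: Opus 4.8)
The plan is to reduce the integral to a fixed Beta-type integral by exploiting the scaling structure of the integrand, and then to check convergence of that fixed integral at its two endpoints. First I would perform the change of variables $\tau = ts$ with $s \in (0,1)$, so that $\dif\tau = t\,\dif s$, $t-\tau = t(1-s)$, and $\tau = ts$. Substituting gives
$$
\int_0^t (t-\tau)^{-\gamma}\tau^{-\theta}\dif\tau = \int_0^1 \big(t(1-s)\big)^{-\gamma}(ts)^{-\theta}\, t\,\dif s = t^{1-\gamma-\theta}\int_0^1 (1-s)^{-\gamma}s^{-\theta}\dif s,
$$
where all the $t$-dependence has been collected into the prefactor $t^{1-\gamma-\theta}$. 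This immediately identifies the claimed exponent $1-\gamma-\theta$ together with the constant $C = \int_0^1 (1-s)^{-\gamma}s^{-\theta}\dif s$, which is manifestly independent of $t$.

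It then remains only to verify that $C < \infty$, i.e.\ that the integrand is integrable on $(0,1)$. The only possible singularities occur at the endpoints $s = 0$ and $s = 1$, so I would split the integral at $s = \tfrac12$. Near $s = 0$ the factor $(1-s)^{-\gamma}$ is bounded, so the integrand behaves like $s^{-\theta}$, which is integrable precisely because $\theta < 1$. Symmetrically, near $s = 1$ the factor $s^{-\theta}$ is bounded and the integrand behaves like $(1-s)^{-\gamma}$, which is integrable precisely because $\gamma < 1$. Applying these two comparisons to the two halves shows that $C$ is finite. Equivalently, one recognizes $C$ as the Beta function $B(1-\theta,\,1-\gamma)$, which is finite exactly under the stated hypotheses.

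There is no genuine obstacle here; the statement is an elementary scaling identity. The only points requiring a little care are bookkeeping in nature: one must note that the integrand is nonnegative and measurable, so the substitution and the factoring of powers of $t$ are legitimate with no convergence issue introduced in the process, and one must make sure that \emph{both} hypotheses are actually used, since $\theta < 1$ controls integrability at $s = 0$ while $\gamma < 1$ controls it at $s = 1$.
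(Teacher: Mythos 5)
Your proof is correct: the substitution $\tau = ts$ reduces the integral to $t^{1-\gamma-\theta}\,B(1-\theta,1-\gamma)$, and your endpoint analysis correctly shows each hypothesis ($\theta<1$ at $s=0$, $\gamma<1$ at $s=1$) is exactly what finiteness requires. The paper itself omits the proof as ``elementary,'' and your argument is precisely the standard scaling argument the authors have in mind, so there is nothing to compare beyond noting that you have supplied the omitted details correctly.
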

The proof of this lemma is elementary and may be omitted. \qed
Let us recall following result on solutions of a quadratic
equation in Banach spaces (Theorem 22.4 in 
\cite{P. G. Lemarie-Rieusset 2002}, p. 227).
\begin{theorem}\label{th2} Let $E$ be a Banach space, and $B: E \times E \rightarrow  E$ 
be a continuous bilinear map such that there exists $\eta > 0$ so that
$$
\|B(x, y)\| \leq \eta \|x\| \|y\|,
$$
for all x and y in $E$. Then for any fixed $y \in E$ 
such that $\|y\| \leq \frac{1}{4\eta}$, the equation $x = y - B(x,x)$ 
has a unique solution  $\overline{x} \in E$ satisfying 
$\|\overline{x}\| \leq \frac{1}{2\eta}$.
\end{theorem}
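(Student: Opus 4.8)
The plan is to solve $x = y - B(x,x)$ by Picard iteration combined with a scalar majorant, so that the argument survives the critical threshold $\|y\| = \frac{1}{4\eta}$. I would set $x_0 = y$ and $x_{n+1} = y - B(x_n,x_n)$, and show that the orbit stays inside the closed ball $\{x \in E : \|x\| \le \frac{1}{2\eta}\}$ and is Cauchy there. The self-mapping step is immediate: if $\|x_n\| \le \frac{1}{2\eta}$ then, using $\|B(x,y)\| \le \eta\|x\|\|y\|$ and $\|y\| \le \frac{1}{4\eta}$,
\[
\|x_{n+1}\| \le \|y\| + \eta\|x_n\|^2 \le \frac{1}{4\eta} + \eta\Big(\frac{1}{2\eta}\Big)^2 = \frac{1}{2\eta}.
\]

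For convergence I would introduce the real majorant $r_0 = \|y\|$, $r_{n+1} = \|y\| + \eta r_n^2$. Since $\|y\| \le \frac{1}{4\eta}$, the quadratic $r = \|y\| + \eta r^2$ has the real root $r_* = \frac{1 - \sqrt{1 - 4\eta\|y\|}}{2\eta} \le \frac{1}{2\eta}$, and one checks that $r_n$ increases monotonically to $r_*$. Writing $B(x_n,x_n) - B(x_{n-1},x_{n-1}) = B(x_n - x_{n-1}, x_n) + B(x_{n-1}, x_n - x_{n-1})$ and arguing by induction, I would prove simultaneously $\|x_n\| \le r_n$ and
\[
\|x_{n+1} - x_n\| \le \eta(\|x_n\| + \|x_{n-1}\|)\|x_n - x_{n-1}\| \le \eta(r_n^2 - r_{n-1}^2) = r_{n+1} - r_n.
\]
The right-hand side telescopes, so $\sum_n \|x_{n+1} - x_n\| \le r_* - \|y\| < \infty$ and $\{x_n\}$ is Cauchy; completeness of $E$ gives a limit $\overline{x}$ with $\|\overline{x}\| \le r_* \le \frac{1}{2\eta}$, and continuity of $B$ passes the limit through the recursion to yield $\overline{x} = y - B(\overline{x}, \overline{x})$.

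The main obstacle is the contraction factor $\eta(\|x\| + \|\overline{x}\|)$: the crude estimate only produces a Lipschitz constant $\le 1$, so at the critical value $\|y\| = \frac{1}{4\eta}$ (where $r_* = \frac{1}{2\eta}$) the naive contraction mapping argument degenerates and neither convergence nor uniqueness follows from a strict contraction alone. This is exactly why I replace strict contraction by the monotone scalar majorant above, which forces the telescoping series to converge with no strict inequality to spare. For uniqueness, given any other solution $x$ with $\|x\| \le \frac{1}{2\eta}$, subtracting the two equations gives $\|x - \overline{x}\| \le \eta(\|x\| + \|\overline{x}\|)\|x - \overline{x}\|$; when $\|y\| < \frac{1}{4\eta}$ one has $\|x\| + \|\overline{x}\| < \frac{1}{\eta}$ and the factor is $< 1$, forcing $x = \overline{x}$, whereas the borderline case in which both solutions saturate $\frac{1}{2\eta}$ needs a short separate argument (re-running the majorant estimate on the difference). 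Away from the threshold the whole statement reduces to the Banach fixed point theorem on $\{\|x\| \le r_*\}$ with contraction constant $2\eta r_* < 1$.
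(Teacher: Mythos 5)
Your existence argument is correct and complete, including at the threshold $\|y\|=\frac{1}{4\eta}$: the invariance of the closed ball of radius $\frac{1}{2\eta}$, the monotone majorant $r_n\uparrow r_*$, the telescoping bound $\|x_{n+1}-x_n\|\le r_{n+1}-r_n$, and the limit passage are all valid. (For reference, the paper itself gives no proof of this statement; it is quoted from Theorem 22.4 of Lemari\'e-Rieusset's book, and your iteration-plus-majorant scheme is the standard route.) Your uniqueness argument also closes whenever $\|y\|<\frac{1}{4\eta}$, because the existence part already gives $\|\overline{x}\|\le r_*<\frac{1}{2\eta}$, so against any competitor $x$ with $\|x\|\le\frac{1}{2\eta}$ the factor $\eta(\|x\|+\|\overline{x}\|)\le \eta\big(\frac{1}{2\eta}+r_*\big)$ is strictly less than $1$.

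The genuine gap is uniqueness in the borderline case $\|y\|=\frac{1}{4\eta}$, which is part of the statement (the hypothesis is non-strict and uniqueness is claimed in the closed ball of radius $\frac{1}{2\eta}$). Your parenthetical remedy, ``re-running the majorant estimate on the difference,'' is not an argument: the difference of two fixed solutions contains no sequence to majorize, and the only available estimate, $\|x-\overline{x}\|\le \eta(\|x\|+\|\overline{x}\|)\|x-\overline{x}\|$, has factor exactly $1$ precisely in the one unresolved configuration, namely both solutions lying on the sphere of radius $\frac{1}{2\eta}$ --- a configuration that cannot be excluded a priori, since at the threshold the constructed solution itself can saturate (take $E=\mathbb{R}$, $B(a,b)=-ab$, $y=\frac{1}{4\eta}$, where $\overline{x}=\frac{1}{2\eta}$). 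The missing idea is to compare the competitor $x$ with your iterates $x_n$ rather than with $\overline{x}$: subtracting $x=y-B(x,x)$ from $x_{n+1}=y-B(x_n,x_n)$ and using $\|x_n\|\le r_n$, $\|x\|\le\frac{1}{2\eta}$ gives $\|x-x_{n+1}\|\le \eta\big(r_n+\frac{1}{2\eta}\big)\|x-x_n\|=(1-\eta s_n)\|x-x_n\|$, where $s_n:=\frac{1}{2\eta}-r_n$. At the threshold your majorant recursion becomes exactly $s_{n+1}=s_n(1-\eta s_n)$ with $s_0=\frac{1}{4\eta}$, so the contraction factor equals $s_{n+1}/s_n$; hence $\|x-x_n\|/s_n$ is non-increasing, and since $s_n$ decreases to $0$ (its only possible limit under the recursion), we get $\|x-x_n\|\le \frac{\|x-x_0\|}{s_0}\,s_n\to 0$, i.e.\ $x=\lim x_n=\overline{x}$. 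Without this (or an equivalent quantitative use of how fast $r_n\to\frac{1}{2\eta}$, e.g.\ $s_n\sim\frac{1}{\eta n}$ and divergence of $\sum_n s_n$), the case $\|y\|=\frac{1}{4\eta}$ of the theorem remains unproven.
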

\section{Proof of Theorem \ref{th1}}
In this section we shall give the proof of Theorem \ref{th1}. \\
We now need four more lemmas. In order to proceed, we define an auxiliary space  $\mathcal{N}^s_{p,T}$ which is made up 
of the functions $u(t,x)$ such that 
$$
u \in C([0, T);\dot{H}^s_p), \big\|u\big\|_{\mathcal{N}^s_{p,T}}:= \underset{0 < t < T}{{\rm sup}}\big\|u(t,.)\big\|_{\dot{H}^s_p} < \infty,
$$
and
\begin{equation}\label{eq5}
\underset{t \rightarrow 0}{\rm lim\ }\big\|u(t)\big\|_{\dot{H}^s_p} = 0,
\end{equation}
with $p > 1$ and $s \geq  \frac{d}{p} - 1$.\\
We define the auxiliary space $\mathcal{K}^{ \tilde q}_{q,T}$ 
which is made up of the functions $u(t,x)$ such that 
$$
t^{\frac{\alpha}{2}}u \in C([0, T); L^{\tilde q}),  \big\|u\big\|_{\mathcal{K}^{ \tilde q}_{q,T}}:= \underset{0 < t < T}{{\rm sup}}t^{\frac{\alpha}{2}}\big\|u(t,.)\big\|_{L^{\tilde q}} < \infty,
$$
and
\begin{equation}\label{eq6} 
\underset{t \rightarrow 0}{\rm lim\ }t^{\frac{\alpha}{2}}
\big\|u(t)\big\|_{L^{\tilde q}} = 0,
\end{equation}
with $\tilde q \geq q \geq d$ and  $\alpha = d(\frac{1}{q} - \frac{1}{\tilde q})$. 
\begin{remark}
The auxiliary space $\mathcal{K}_{\tilde q}: 
= \mathcal{K}^{\tilde q}_{d,T}\  (\tilde q \geq d)$ was introduced by 
Weissler and systematically used by Kato \cite{T. Kato 1984} 
and Cannone \cite{M. Cannone 1997}.
\end{remark}
\begin{lemma}\label{lem5} Suppose that $u_0 \in \dot{H}^s_p(\mathbb{R}^d)$
 with $p >1$ and $\frac{d}{p} - 1 \leq s < \frac{d}{p}$.
Then for all $\tilde q $ satisfying
$$
\tilde q > {\rm max}\{p,q\},
$$
where
$$
\frac{1}{q} = \frac{1}{p} - \frac{s}{d},
$$
we have
$$
\me^{t\Delta}u_0 \in \mathcal{K}^{\tilde q }_{q,\infty}.
$$
\end{lemma}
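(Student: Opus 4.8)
The plan is to reduce the whole statement to a single convolution estimate for the heat semigroup. Writing $f := \dot{\Lambda}^s u_0$, we have $f \in L^p$ with $\|f\|_{L^p} = \|u_0\|_{\dot{H}^s_p}$, and $\me^{t\Delta}u_0 = \dot{\Lambda}^{-s}\me^{t\Delta} f = G_t * f$, where $G_t$ is the kernel of $\dot{\Lambda}^{-s}\me^{t\Delta}$. By the same scaling computation as in Lemma \ref{lem1}, the symbol $|\xi|^{-s}\me^{-t|\xi|^2}$ gives $G_t(x) = t^{-\frac{d-s}{2}}G_1(x/\sqrt t)$; for $0 < s < \frac{d}{p}$ the operator $\dot{\Lambda}^{-s}$ contributes only a locally integrable Riesz-potential singularity at the origin, smoothed by the Gaussian, while the tail decays like $|x|^{s-d}$, so that $|G_1(x)| \lesssim (1+|x|)^{-(d-s)}$, and for $s \le 0$ the kernel is even Schwartz. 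First I would record that $G_1 \in L^r$ with $\frac{1}{r} = 1 + \frac{1}{\tilde q} - \frac{1}{p}$ exactly when $\tilde q > q$ (integrability of the tail) and $\tilde q \ge p$ (so $r \ge 1$), i.e. precisely under the hypothesis $\tilde q > {\rm max}\{p,q\}$. Young's inequality then yields
$$
\big\|\me^{t\Delta}u_0\big\|_{L^{\tilde q}} \le \|G_t\|_{L^r}\|f\|_{L^p} \lesssim t^{-\frac{\alpha}{2}}\|u_0\|_{\dot{H}^s_p},
$$
once one checks that the scaling exponent $-\frac{d-s}{2} + \frac{d}{2r}$ equals $-\frac{\alpha}{2}$, where $\alpha = d(\frac{1}{q} - \frac{1}{\tilde q})$. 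This already gives $\sup_{0<t<\infty} t^{\alpha/2}\|\me^{t\Delta}u_0\|_{L^{\tilde q}} < \infty$.

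Alternatively, this bound can be read off from Lemma \ref{lem3}: there is a Besov-type Sobolev embedding $\dot{H}^s_p \hookrightarrow \dot{B}^{\sigma,\infty}_{\tilde q}$ with $\sigma = s - d(\frac{1}{p} - \frac{1}{\tilde q})$, and $\sigma < 0$ precisely because $\tilde q > q$. Applying the $p=\infty$ case of Lemma \ref{lem3} identifies $\sup_{t>0} t^{-\sigma/2}\|\me^{t\Delta}u_0\|_{L^{\tilde q}}$ with $\|u_0\|_{\dot{B}^{\sigma,\infty}_{\tilde q}}$, and since $-\sigma/2 = \alpha/2$ the same estimate follows.

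It remains to upgrade this uniform bound to membership in $\mathcal{K}^{\tilde q}_{q,\infty}$, namely the vanishing \eqref{eq6} and the continuity. For the vanishing I would argue by density: for $\phi$ in a dense subclass of $\dot{H}^s_p$ that also lies in $L^{\tilde q}$ (Schwartz functions whose Fourier transform is supported away from the origin), contractivity of the heat semigroup gives $t^{\alpha/2}\|\me^{t\Delta}\phi\|_{L^{\tilde q}} \le t^{\alpha/2}\|\phi\|_{L^{\tilde q}} \to 0$ as $t \to 0$, since $\alpha > 0$. Splitting $u_0 = \phi + (u_0-\phi)$ and applying the bound above to the remainder shows $\limsup_{t\to 0} t^{\alpha/2}\|\me^{t\Delta}u_0\|_{L^{\tilde q}} \lesssim \|u_0-\phi\|_{\dot{H}^s_p}$, which is arbitrarily small, giving \eqref{eq6}. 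Continuity on $(0,\infty)$ follows from the strong continuity of $\me^{t\Delta}$ on $L^{\tilde q}$ applied to $\me^{(t_0/2)\Delta}u_0 \in L^{\tilde q}$, and continuity at $t=0$ with value $0$ is exactly the vanishing just established.

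The analytic heart of the argument is the decay estimate $|G_1(x)| \lesssim (1+|x|)^{-(d-s)}$ in the regime $0 < s < \frac{d}{p}$, where $\dot{\Lambda}^{-s}$ is a genuinely non-local Riesz potential: one must verify both that its singularity at the origin is tamed by the Gaussian and that its slowly decaying tail is $L^r$-integrable precisely when $\tilde q > q$. This is the step that uses the hypothesis $\tilde q > {\rm max}\{p,q\}$ in full, and I expect it to be the main obstacle; the density and continuity arguments are then routine.
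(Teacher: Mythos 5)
Your proposal is correct, but it is organized along a genuinely different route from the paper's. The paper splits into two cases according to the sign of $s$: for $s\ge 0$ (i.e.\ $p\le q$) it first invokes the Sobolev embedding of Lemma \ref{lem2} to place $u_0\in L^q$ and then applies Young's inequality with the plain Gaussian kernel (estimate \eqref{eq7}); only for $s<0$ (i.e.\ $p>q$) does it use the kernel of $\dot\Lambda^{-s}\me^{t\Delta}$, which in that regime decays like $(1+|x|)^{-(d-s)}$ with $d-s>d$ and hence lies in every $L^r$. You instead treat all $s$ at once through the kernel $G_t$ of $\dot\Lambda^{-s}\me^{t\Delta}$; the price is that for $0<s<\frac{d}{p}$ this kernel is a Gaussian-smoothed Riesz potential whose tail decays only like $|x|^{s-d}$, so that $G_1\in L^r$ becomes a genuine constraint, and a pleasant feature of your argument is that this constraint ($\tilde q>q$ for the tail, $\tilde q\ge p$ for $r\ge1$) is exactly what the hypothesis supplies, whereas in the paper the same hypothesis enters in two different disguises in the two cases. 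What the paper's route buys is that every kernel it convolves with decays faster than $|x|^{-d}$, so no borderline tail analysis is needed; what your route buys is uniformity and transparency about where the hypotheses are used, at the cost of the smoothed-Riesz-potential bound $|G_1(x)|\lesssim(1+|x|)^{-(d-s)}$, which you rightly flag as the analytic heart and which is of the same nature (and the same level of unproved detail) as the kernel bound the paper quotes in its $s<0$ case. For the vanishing condition \eqref{eq6}, the paper uses a concrete truncation of $u_0$ (resp.\ $\dot\Lambda^su_0$) at height and radius $n$; your density argument --- Schwartz functions with Fourier transform supported away from the origin, heat-semigroup contractivity on $L^{\tilde q}$, and the uniform bound applied to the remainder --- is the same idea in abstract form, and is legitimate since $p>1$ guarantees density of that class in $\dot H^s_p$ (your alternative derivation of the sup bound via Lemma \ref{lem3} is also sound, and is in fact how the paper argues later, in Proposition \ref{pro2}). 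Two small inaccuracies, neither harmful: for $s<0$ the kernel is not Schwartz unless $-s$ is an even integer (the symbol $|\xi|^{-s}\me^{-|\xi|^2}$ fails to be smooth at $\xi=0$), it merely decays like $|x|^{-(d-s)}$, which is all you need; and your ``exactly when'' should be ``is implied by,'' since what Young's inequality needs is $\tilde q\ge p$ together with $\tilde q>q$, slightly weaker than $\tilde q>\max\{p,q\}$.
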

\begin{proof} First, we consider the case $p \leq q$. 
In this case $s \geq 0$, applying Lemma \ref{lem2}, we have $u_0 \in L^q$. We will prove that
$$
\underset{0 < t < \infty}{{\rm sup}}t^{\frac{\alpha}{2}}
\big\|\me^{t\Delta}u_0\big\|_{L^{\tilde q}} \lesssim 
\big\|u_0\big\|_{L^q},\ {\rm for\ all}\ \tilde q \geq q.
$$
Set
$$
\frac{1}{h} = 1 +  \frac{1}{\tilde q } - \frac{1}{q}.
$$
Applying Young's inequality we obtain
\begin{gather}
\big\|\me^{t\Delta}u_0\big\|_{L^{\tilde q}} = 
\frac{1}{(4\pi t)^{d/2}} \big\|\me^{\frac{-| . |^2}{4t}}*
u_0\big\|_{L^{\tilde q}} \notag \lesssim 
\frac{1}{t^{d/2}}\big\|\me^{\frac{-| . |^2}{4t}}
\big\|_{L^h}\big\|u_0\big\|_{L^q}  \notag \\
=t^{-\frac{\alpha}{2}}\big\|\me^{\frac{-| . |^2}{4}}\big\|_{L^h}
\big\|u_0\big\|_{L^{q}} \simeq t^{-\frac{\alpha}{2}}
\big\|u_0\big\|_{L^q}. \label{eq7}  
\end{gather}
This proves the result. We now prove that
$$
\underset{t \rightarrow 0}{\rm lim\ }t^{\frac{\alpha}{2}}
\big\|\me^{t\Delta}u_0\big\|_{L^{\tilde q}} = 0, \text{ for all}\ \tilde q > q.
$$
Set $\mathcal{X}_n(x) = 0$ for $x \in \{x : \ |x| < n\} \cap \{x : \ |u_0(x)| < n\}$ 
and $\mathcal{X}_n(x) = 1$ otherwise. We have
$$
t^{\frac{\alpha}{2}}\big\|\me^{t\Delta}u_0\big\|_{L^{\tilde q}} \leq 
C\big(t^{\frac{\alpha - d}{2}}\big\|\me^{\frac{-| . |^2}{4t}}*
(\mathcal{X}_nu_0)\big\|_{L^{\tilde q}} + 
t^{\frac{\alpha - d}{2}}\big\|\me^{\frac{-| . |^2}{4t}}*
((1 - \mathcal{X}_n)u_0)\big\|_{L^{\tilde q}}\big).
$$
Applying Young's inequality, we have
\begin{gather}
Ct^{\frac{\alpha - d}{2}}\big\|\me^{\frac{-| . |^2}{4t}}*(\mathcal{X}_nu_0)
\big\|_{L^{\tilde q }} \leq
C_1\big\|\me^{\frac{-| . |^2}{4}}\big\|_{L^h}\big\|\mathcal{X}_nu_0
\big\|_{L^q} \leq C_2\big\|\mathcal{X}_nu_0\big\|_{L^q}. \label{eq8}
\end{gather}
For any $\epsilon > 0$, we can take $n$ large enough that 
$C_2\big\|\mathcal{X}_nu_0\big\|_{L^q} < \frac{\epsilon}{2}$.\\
Fixed one of such $n$ and applying Young's inequality, we have
\begin{gather}
Ct^{\frac{\alpha - d}{2}}\big\|\me^{\frac{-| . |^2}{4t}}*
((1 - \mathcal{X}_n)u_0)
\big\|_{L^{\tilde q}} \leq
C_3t^{\frac{\alpha - d}{2}}\big\|\me^{\frac{-| . |^2}{4t}}\big\|_{L^1}
\big\|(1 - \mathcal{X}_n)u_0\big\|_{L^{\tilde q}}  \notag \\
\leq C_3t^{\frac{\alpha}{2}}\big\|\me^{\frac{-| . |^2}{4}}\big\|_{L^1}
\big\|n(1 - \mathcal{X}_n)\big\|_{L^{\tilde q}} = C_4(n)t^{\frac{\alpha}{2}} < \frac{\epsilon}{2},\ {\rm for}\ t<t_0  \label{eq9}
\end{gather}
with small enough $t_0=t_0(n)$.  From estimates \eqref{eq8} 
and \eqref{eq9}, we have
$$
t^{\frac{\alpha}{2}}\big\|\me^{t\Delta}u_0\big\|_{L^{\tilde q }} 
\leq C_2\big\|\mathcal{X}_nu_0\big\|_{L^{q}} + 
C_5(n)t^{\frac{\alpha}{2}} < \epsilon,\ {\rm for}\ t<t_0.
$$
Finally, we consider the case $p > q$. In this case $s < 0$, 
we will prove that
$$
\underset{0 < t < \infty}{{\rm sup}}t^{\frac{\alpha}{2}}
\big\|\me^{t\Delta}u_0\big\|_{L^{\tilde q}} \lesssim 
\big\|u_0\big\|_{\dot{H}^s_p},\ {\rm for}\ \tilde q \geq p.
$$
We have 
\begin{equation*}
\me^{t\Delta}u_0 =\me^{t\Delta}\dot{\Lambda}^{-s}\dot{\Lambda}^su_0 
= \frac{1}{t^{\frac{d - s}{2}}} K\Big(\frac{.}{\sqrt t}\Big) * 
(\dot{\Lambda}^su_0),
\end{equation*}
where
$$
\hat K(\xi) = \frac{1}{(2\pi)^{\frac{d}{2}}} \me^{-|\xi|^2} |\xi|^{- s},\ |K(x)| \lesssim \frac{1}{(1 + |x|)^{d - s}}.
$$
Set
$$
\frac{1}{h} = 1 +  \frac{1}{\tilde q} - \frac{1}{p}.
$$
Applying Young's inequality to obtain
\begin{gather*}
\big\|\me^{t\Delta}u_0\big\|_{L^{\tilde q }} \lesssim 
t^{-\frac{\alpha}{2}}\big\|K\big\|_{L^h}\big\|\dot{\Lambda}^su_0
\big\|_{L^p} \simeq t^{-\frac{\alpha}{2}}\big\|u_0\big\|_{\dot{H}^s_p}. 
\end{gather*}
This proves the result. We now claim that
\begin{equation*}
\underset{t \rightarrow 0}{\rm lim\ }t^{\frac{\alpha}{2}}
\big\|\me^{t\Delta}u_0\big\|_{L^{\tilde q }} = 0,\ \text{ for all}\ \tilde q > p.
\end{equation*}
Set  $\mathcal{X}_{n,s}(x) = 0$ for $x \in \{x : \ |x| < n\} 
\cap \{x : \ |\dot{\Lambda}^su_0(x)| < n\}$ and $\mathcal{X}_{n,s}(x) = 1$ 
otherwise. From the above proof we deduce that, for any $\epsilon > 0$, there exist a sufficiently large  $n$ 
and a sufficiently small $t_0 = t_0(n)$ such that
\begin{gather*}
t^{\frac{\alpha}{2}}\big\|\me^{t\Delta}u_0\big\|_{L^{\tilde q}} \leq  \notag \\
C_1\big\|K\big\|_{L^h}\big\|\mathcal{X}_{n,s}\dot{\Lambda}^s
u_0\big\|_{L^p} +  C_2nt^{\frac{d}{2}(\frac{1}{p} - \frac{1}{\tilde q})}
\big\|K\big\|_{L^1}\big\|1 - \mathcal{X}_{n,s}\big\|_{L^{\tilde q}} < \epsilon,\ {\rm for}\ t<t_0.
\end{gather*}
\end{proof}
In the following lemmas a particular attention will be devoted to the study 
of the bilinear operator $B(u, v)(t)$ defined by
\begin{equation}\label{eq10}
B(u, v)(t) = \int_{0}^{t} e^{(t-\tau ) \Delta} \mathbb{P} 
\nabla\cdot\big(u(\tau)\otimes v(\tau)\big) \dif\tau.
\end{equation}
\begin{lemma}\label{lem6}
Let $p$ and $s$ be such that
\begin{equation*}
p > \frac{d}{2}\ \text{and}\ \frac{d}{p} - 1 \leq s  < \frac{d}{2p}.
\end{equation*}
Then the bilinear operator $B$ is continuous 
from $\mathcal{K}^{\tilde q}_{q,T} \times 
\mathcal{K}^{\tilde q}_{q,T}$ into 
$\mathcal{N}^s_{p,T}$, where 
\begin{equation*}
\frac{1}{q} = \frac{1}{p} - \frac{s}{d},\ q < \tilde q  \leq 2p,
\end{equation*}
and the following inequality holds
\begin{equation}\label{eq11}
\big\|B(u, v)\big\|_{\mathcal{N}^s_{p,T}} \leq
 CT^{\frac{1}{2}(1 + s  - \frac{d}{p})}
\big\|u\big\|_{\mathcal{K}^{\tilde q}_{q,T}}
\big\|v\big\|_{\mathcal{K}^{\tilde q}_{q,T}},
\end{equation}
where C is a positive constant and independent of T.
\end{lemma}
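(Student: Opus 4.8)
The plan is to reduce everything to an $L^p$ estimate on $\dot\Lambda^s B(u,v)(t)$, since by definition $\big\|B(u,v)(t)\big\|_{\dot{H}^s_p} = \big\|\dot\Lambda^s B(u,v)(t)\big\|_{L^p}$. Commuting $\dot\Lambda^s$ inside the time integral in \eqref{eq10} and invoking Lemma \ref{lem1}, I would write $\dot\Lambda^s B(u,v)(t) = \int_0^t K_{t-\tau} * \big(u(\tau)\otimes v(\tau)\big)\,\dif\tau$, where $K_a(x) = a^{-(d+1+s)/2}K(x/\sqrt a)$ and $|K(x)| \lesssim (1+|x|)^{-(d+1+s)}$. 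Taking $L^p$ norms and using Minkowski's integral inequality, the task becomes to bound $\int_0^t \big\|K_{t-\tau} * (u(\tau)\otimes v(\tau))\big\|_{L^p}\,\dif\tau$.

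For the integrand I would apply Young's inequality with exponents $\frac1r = 1 + \frac1p - \frac2{\tilde q}$, splitting off the kernel in $L^r$ and $u(\tau)\otimes v(\tau)$ in $L^{\tilde q/2}$; Hölder's inequality then gives $\big\|u(\tau)\otimes v(\tau)\big\|_{L^{\tilde q/2}} \leq \big\|u(\tau)\big\|_{L^{\tilde q}}\big\|v(\tau)\big\|_{L^{\tilde q}}$. The scaling of $K_a$ yields $\big\|K_{t-\tau}\big\|_{L^r} = \big\|K\big\|_{L^r}(t-\tau)^{-\gamma}$ with $\gamma = \frac{d+1+s}{2} - \frac{d}{2r}$, while the definition of the $\mathcal{K}^{\tilde q}_{q,T}$-norm bounds $\big\|u(\tau)\big\|_{L^{\tilde q}}\big\|v(\tau)\big\|_{L^{\tilde q}} \leq \tau^{-\alpha}\big\|u\big\|_{\mathcal{K}^{\tilde q}_{q,T}}\big\|v\big\|_{\mathcal{K}^{\tilde q}_{q,T}}$, where $\alpha = d(\frac1q - \frac1{\tilde q})$. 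Collecting these and applying Lemma \ref{lem4} with $\theta = \alpha$, the time integral produces a factor $t^{1-\gamma-\alpha}$, and a direct computation shows the exponent collapses to $1-\gamma-\alpha = \frac12(1+s-\frac dp)$; since this exponent is $\geq 0$ we may replace $t$ by $T$ to reach \eqref{eq11}.

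The part that needs genuine care is verifying that every exponent constraint is met under the hypotheses. I would check that $\big\|K\big\|_{L^r} < \infty$ (i.e.\ $r(d+1+s) > d$) and that $1 \leq r$ --- the latter being exactly where $\tilde q \leq 2p$ is used --- as well as the two conditions $\theta = \alpha < 1$ and $\gamma < 1$ required by Lemma \ref{lem4}. Here the inequality $s \geq \frac dp - 1$ (equivalently $q \geq d$, which also guarantees $\tilde q > q \geq d \geq 2$ so that $\tilde q/2 \geq 1$ in Hölder) and the strict bound $\tilde q > q$ are precisely what force $\gamma < 1$; I expect this bookkeeping to be the main obstacle, together with the fact that the margin is lost at the critical endpoint $s = \frac dp -1$.

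Finally, to land in $\mathcal{N}^s_{p,T}$ rather than merely in the space with finite $\sup_{0<t<T}\|\cdot\|_{\dot{H}^s_p}$, I would establish the continuity of $t \mapsto B(u,v)(t)$ in $\dot{H}^s_p$ by a standard dominated-convergence argument on the same representation, and verify the vanishing condition \eqref{eq5}. In the supercritical range the exponent $\frac12(1+s-\frac dp)$ is strictly positive, so $\big\|B(u,v)(t)\big\|_{\dot{H}^s_p} \to 0$ is immediate; at the critical index the exponent vanishes, and I would instead run the estimate with the localized quantity $\sup_{0<\tau<t}\tau^{\alpha/2}\big\|u(\tau)\big\|_{L^{\tilde q}}$ in place of the full $\mathcal{K}^{\tilde q}_{q,T}$-norm, which tends to $0$ as $t \to 0$ by \eqref{eq6}, thereby forcing $\big\|B(u,v)(t)\big\|_{\dot{H}^s_p}\to 0$.
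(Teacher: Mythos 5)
Your proposal is correct and follows essentially the same route as the paper's proof: the same kernel representation and scaling from Lemma \ref{lem1}, the same Young/H\"older splitting with $\frac1r = 1+\frac1p-\frac2{\tilde q}$ (with $\tilde q \leq 2p$ guaranteeing $r\geq 1$ and $\tilde q > q \geq d$ guaranteeing the Lemma \ref{lem4} exponent conditions), the same collapse of the exponent to $\frac12(1+s-\frac dp)$, and the same localized-in-time suprema to obtain the vanishing condition \eqref{eq5} --- which is exactly how the paper's estimate \eqref{eq18} handles the critical case $s=\frac dp-1$.
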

\begin{proof} By Lemma \ref{lem1}, we have 
\begin{gather}
\big\|B(u,v)(t)\big\|_{\dot{H}^s_p} \leq \int_{0}^{t} 
\big\|\dot\Lambda^s\me^{(t- \tau) \Delta} \mathbb{P} \nabla\cdot\big(u(\tau)\otimes 
v(\tau)\big)\big\|_{L^p}{\rm d}\tau   \notag \\
 =\int_{0}^{t} \Big\|\frac{1}{(t - \tau)^{\frac{d + 1 + s}{2}}}K\Big(\frac{.}
{\sqrt{t - \tau}}\Big)*\big(u(\tau) \otimes v(\tau)\big)\Big\|_{L^p}{\rm d}\tau. \label{eq12}
\end{gather}
Applying Young's inequality, we have
\begin{gather}
\Big\|\frac{1}{(t - \tau)^{\frac{d + 1 + s}{2}}}K\Big(\frac{.}
{\sqrt{t - \tau}}\Big)*\big(u(\tau) \otimes v(\tau)\big)\Big\|_{L^p}  \notag \\
\lesssim\frac{1}{(t - \tau)^{\frac{d + 1 + s}{2}}}
\Big\|K\Big(\frac{.}{\sqrt{t - \tau}}\Big)\Big\|_{L^r}
\big\|u(\tau) \otimes v(\tau)\big\|_{L^{{\frac{\tilde q}{2}}}},\label{eq13}
\end{gather}
where
\begin{gather}
\frac{1}{r} = 1+ \frac{1}{p} - \frac{2}{\tilde q},\label{eq14}
\end{gather}
Applying H\"{o}lder's inequality, we have
\begin{gather}
\big\|u(\tau) \otimes v(\tau)\big\|_{L^{\frac{\tilde q}{2}}} 
\leq \big\|u(\tau)\big\|_{L^{\tilde q}}\big\|v(\tau)\big\|_{L^{\tilde q}}.
\label{eq15}
\end{gather}
Since the equality \eqref{eq14} and Lemma \ref{lem1}  it follows that
\begin{gather}
\Big\|K\Big(\frac{.}{\sqrt{t - \tau}}\Big)\Big\|_{L^r} 
=  (t - \tau)^{\frac{d}{2r}}\big\|K\big\|_{L^r} 
\simeq (t - \tau)^{\frac{d}{2}(1+ \frac{1}{p} - \frac{2}{\tilde q})}.
\label{eq16}
\end{gather}
From the inequalities \eqref{eq13}, \eqref{eq15}, 
and \eqref{eq16} we deduce that 
\begin{gather}
\big\|\me^{(t - \tau)\Delta}\mathbb{P}\nabla\cdot\big(u(\tau) 
\otimes v(\tau)\big)\big\|_{\dot{H}^s_p} 
\lesssim (t - \tau)^{\frac{d}{2p} - \frac{d}{\tilde q} 
- \frac{s + 1}{2}}\big\|u(\tau)\big\|_{L^{\tilde q}}
\big\|v(\tau)\big\|_{L^{\tilde q}}.\label{eq17}
\end{gather}
By the inequalities \eqref{eq12}, \eqref{eq17},  and Lemma \ref{lem4}, we have
\begin{gather}
\big\|B(u, v)(t)\big\|_{\dot{H}^s_p} \lesssim  
\int_0^t (t - \tau)^{\frac{d}{2p} - \frac{d}{\tilde q} 
- \frac{s + 1}{2}}\big\|u(\tau)\big\|_{L^{\tilde q}}
\big\|v(\tau)\big\|_{L^{\tilde q}}\dif\tau \notag \\
\leq \int_0^t (t - \tau)^{\frac{d}{2p} - \frac{d}{\tilde q} - 
\frac{s + 1}{2}}\tau^{-\alpha}\underset{0 < \eta < t}{{\rm sup}}
\eta^{\frac{\alpha}{2}}\big\|u(\eta)\big\|_{L^{\tilde q}}
\underset{0 < \eta < t}{{\rm sup}}\eta^{\frac{\alpha}{2}}
\big\|v(\eta)\big\|_{L^{\tilde q}}\dif\tau \notag \\
= \underset{0 < \eta < t}{{\rm sup}}\eta^{\frac{\alpha}{2}}
\big\|u(\eta)\big\|_{L^{\tilde q}}\underset{0 < \eta < t}{{\rm sup}}
\eta^{\frac{\alpha}{2}}\big\|v(\eta)\big\|_{L^{\tilde q}}
\int_0^t (t - \tau)^{\frac{d}{2p} - \frac{d}{\tilde q} 
- \frac{s + 1}{2}}\tau^{-\alpha}\dif\tau \notag \\
\simeq  t^{\frac{1}{2}(1 + s - \frac{d}{p})}
\underset{0 < \eta < t}{{\rm sup}}\eta^{\frac{\alpha}{2}}
\big\|u(\eta)\big\|_{L^{\tilde q}}\underset{0 < \eta < t}{{\rm sup}}
\eta^{\frac{\alpha}{2}}\big\|v(\eta)\big\|_{L^{\tilde q}}.  \label{eq18}
\end{gather}
The estimate \eqref{eq11} is deduced from the inequality \eqref{eq18}.\\ 
Let us now check the validity of condition \eqref{eq5} 
for the bilinear term $B(u,v)(t)$.\\
In fact, from the estimate \eqref{eq18} it follows that
\begin{equation}\label{eq19}
\underset{t \rightarrow 0}{\rm lim\ }
\big\|B(u,v)(t)\big\|_{\dot{H}^s_p} = 0,
\end{equation}
whenever
$$
\underset{t \rightarrow 0}{\rm lim\ }
t^{\frac{\alpha}{2}}\big\|u(t)\big\|_{L^{\tilde q}} 
= \underset{t \rightarrow 0}{\rm lim\ }t^{\frac{\alpha}{2}}
\big\|v(t)\big\|_{L^{\tilde q}} = 0.
$$
Finally, the continuity at $t=0$ of $B(u, v)(t)$ follows from the equality \eqref{eq19}. The continuity elsewhere follows from carefully rewriting the expression 
$\int^{t+\epsilon}_0 - \int^{t}_0$ and applying the same argument. \end{proof} 
\begin{lemma}\label{lem7}
Let $q$ and $\tilde q$ be such that $\tilde q > q \geq d$. Then the bilinear operator $B$ is continuous from 
$\mathcal{K}^{\tilde q}_{q,T} \times \mathcal{K}^{\tilde q}_{q,T}$ 
into $\mathcal{K}^{\tilde q}_{q,T}$ and the following inequality holds
\begin{equation}\label{eq20}
\big\|B(u, v)\big\|_{\mathcal{K}^{\tilde q}_{q,T}} \leq
 CT^{\frac{1}{2}(1  - \frac{d}{q})}
\big\|u\big\|_{\mathcal{K}^{\tilde q}_{q,T}}
\big\|v\big\|_{\mathcal{K}^{\tilde q}_{q,T}},
\end{equation}
where C is a positive constant and independent of T.
\end{lemma}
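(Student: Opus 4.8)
The plan is to mirror the argument of Lemma \ref{lem6}, with the single change that the kernel estimate is taken in its $s=0$ form, since here the output of $B$ is measured in $L^{\tilde q}$ rather than in $\dot H^s_p$. Concretely, I would start from the definition \eqref{eq10}, invoke Lemma \ref{lem1} with $s=0$ to write $\me^{(t-\tau)\Delta}\mathbb{P}\nabla\cdot(u(\tau)\otimes v(\tau)) = (t-\tau)^{-\frac{d+1}{2}}K\big(\frac{\cdot}{\sqrt{t-\tau}}\big)*(u(\tau)\otimes v(\tau))$, where $|K(x)|\lesssim (1+|x|)^{-(d+1)}$, and then apply Young's inequality with the exponent relation $\frac1r = 1 - \frac1{\tilde q}$ (so that the convolution maps $L^r\times L^{\tilde q/2}$ into $L^{\tilde q}$), followed by H\"older's inequality $\|u(\tau)\otimes v(\tau)\|_{L^{\tilde q/2}} \le \|u(\tau)\|_{L^{\tilde q}}\|v(\tau)\|_{L^{\tilde q}}$. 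Note that $K\in L^r$ for this choice of $r$ because $r>1$ and the decay exponent $d+1$ exceeds $d$.

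Next, the scaling identity $\|K(\cdot/\sqrt{t-\tau})\|_{L^r} = (t-\tau)^{\frac{d}{2r}}\|K\|_{L^r}$ collapses the kernel factor to $(t-\tau)^{-\frac12 - \frac{d}{2\tilde q}}$, yielding the pointwise-in-time bound $\|\me^{(t-\tau)\Delta}\mathbb{P}\nabla\cdot(u(\tau)\otimes v(\tau))\|_{L^{\tilde q}} \lesssim (t-\tau)^{-\frac12 - \frac{d}{2\tilde q}}\|u(\tau)\|_{L^{\tilde q}}\|v(\tau)\|_{L^{\tilde q}}$. Inserting the $\mathcal{K}^{\tilde q}_{q,T}$-control $\|u(\tau)\|_{L^{\tilde q}}\|v(\tau)\|_{L^{\tilde q}} \le \tau^{-\alpha}\|u\|_{\mathcal{K}^{\tilde q}_{q,T}}\|v\|_{\mathcal{K}^{\tilde q}_{q,T}}$ with $\alpha = d(\frac1q-\frac1{\tilde q})$ and integrating in $\tau$, I would apply Lemma \ref{lem4} with $\gamma = \frac12 + \frac{d}{2\tilde q}$ and $\theta = \alpha$. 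The hypotheses of Lemma \ref{lem4} are precisely where the constraints enter: $\gamma<1$ is equivalent to $\tilde q>d$, while $\theta=\alpha<1$ follows from $q\ge d$, since then $\alpha < d/q \le 1$. Lemma \ref{lem4} produces the factor $t^{1-\gamma-\alpha}$, and after multiplying by $t^{\alpha/2}$ the exponent bookkeeping simplifies to $\frac12 - \frac{d}{2q} = \frac12(1-\frac dq)\ge0$; taking the supremum over $0<t<T$ then gives the factor $T^{\frac12(1-\frac dq)}$ and the estimate \eqref{eq20}.

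It remains to check that $B(u,v)$ genuinely lies in $\mathcal{K}^{\tilde q}_{q,T}$, that is, the vanishing condition \eqref{eq6} and continuity in time. Retaining the running suprema $\sup_{0<\eta<t}\eta^{\alpha/2}\|u(\eta)\|_{L^{\tilde q}}$ in the estimate (in place of the global norms) gives $t^{\alpha/2}\|B(u,v)(t)\|_{L^{\tilde q}} \lesssim t^{\frac12(1-\frac dq)}\big(\sup_{0<\eta<t}\eta^{\alpha/2}\|u(\eta)\|_{L^{\tilde q}}\big)\big(\sup_{0<\eta<t}\eta^{\alpha/2}\|v(\eta)\|_{L^{\tilde q}}\big)$, and the limit condition built into $\mathcal{K}^{\tilde q}_{q,T}$ for $u$ and $v$ forces the right-hand side to $0$ as $t\to0$; continuity away from $0$ follows from the $\int_0^{t+\epsilon}-\int_0^t$ splitting exactly as in Lemma \ref{lem6}. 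The one place that genuinely needs care — and the step I expect to be the main obstacle — is the borderline case $q=d$: there the gain $t^{\frac12(1-d/q)}$ degenerates to $t^0$, so the vanishing \eqref{eq6} cannot be read off from a positive power of $t$ and must instead be extracted from the vanishing of the running suprema of $u$ and $v$. This is exactly why the definition of $\mathcal{K}^{\tilde q}_{q,T}$ incorporates the limit \eqref{eq6} as part of its hypotheses.
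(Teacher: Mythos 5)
Your proposal is correct and takes essentially the same approach as the paper: the paper's proof simply specializes its estimate \eqref{eq17} from Lemma \ref{lem6} to $s=0$, $p=\tilde q$ — which is exactly the Lemma \ref{lem1} kernel bound plus Young/H\"older/scaling computation you redo — and then applies Lemma \ref{lem4} and passes to the running suprema exactly as you do. Your observation about the borderline case $q=d$ also matches the paper's treatment, since in \eqref{eq21} the vanishing of $t^{\frac{\alpha}{2}}\|B(u,v)(t)\|_{L^{\tilde q}}$ as $t\to 0$ is indeed extracted from the vanishing of the running suprema of $u$ and $v$ (condition \eqref{eq6}) rather than from a positive power of $t$.
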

\begin{proof} Applying the estimate \eqref{eq17} 
for $s = 0\ {\rm and}\ p = \tilde q$, we have
\begin{gather*}
\big\|\me^{(t - \tau)\Delta}\mathbb{P}\nabla\cdot\big(u(\tau)  
\otimes v(\tau)\big)\big\|_{L ^{\tilde q}} 
\lesssim (t - \tau)^{- \frac{d}{2\tilde q} - \frac{1}{2}}
\big\|u(\tau)\big\|_{L^{\tilde q}}\big\|v(\tau)\big\|_{L^{\tilde q}}.
\end{gather*}
Applying Lemma \ref{lem4}, we have
\begin{gather*}
\big\|B(u, v)(t)\big\|_{L^{\tilde q}} \lesssim  
\int_0^t (t - \tau)^{- \frac{d}{2\tilde q} - \frac{1}{2}}
\big\|u(\tau)\big\|_{L^{\tilde q}}.\big\|v(\tau)\big\|_{L^{\tilde q}}
\dif\tau \notag \\ 
\leq \int_0^t (t - \tau)^{- \frac{d}{2\tilde q} - \frac{1}{2}}
\tau^{-\alpha}\underset{0 < \eta< t}{{\rm sup}}\eta^{\frac{\alpha}{2}}
\big\|u(\eta)\big\|_{L^{\tilde q}}\underset{0 < \eta < t}{{\rm sup}}
\eta^{\frac{\alpha}{2}}\big\|v(\eta)\big\|_{L^{\tilde q}}\dif\tau \notag \\
= \underset{0 < \eta < t}{{\rm sup}}\eta^{\frac{\alpha}{2}}
\big\|u(\eta)\big\|_{L^{\tilde q}}\underset{0 < \eta < t}{{\rm sup}}
\eta^{\frac{\alpha}{2}}\big\|v(\eta)\big\|_{L^{\tilde q}}
\int_0^t (t - \tau)^{- \frac{d}{2\tilde q} - \frac{1}{2}}
\tau^{-\alpha}\dif\tau \notag \\
 \simeq  t^{-\frac{\alpha}{2}}t^{\frac{1}{2}(1  - \frac{d}{q})}
\underset{0 < \eta < t}{{\rm sup}}\eta^{\frac{\alpha}{2}}
\big\|u(\eta)\big\|_{L^{\tilde q}}\underset{0 < \eta < t}{{\rm sup}}
\eta^{\frac{\alpha}{2}}\big\|v(\eta)\big\|_{L^{\tilde q}}.
\end{gather*}
Thus
\begin{gather}
t^{\frac{\alpha}{2}}\big\|B(u, v)(t)\big\|_{L^{\tilde q}}
 \lesssim  t^{\frac{1}{2}(1  - \frac{d}{q})}
\underset{0 < \eta < t}{{\rm sup}}\eta^{\frac{\alpha}{2}}
\big\|u(\eta)\big\|_{L^{\tilde q}}\underset{0 < \eta < t}{{\rm sup}}
\eta^{\frac{\alpha}{2}}\big\|v(\eta)\big\|_{L^{\tilde q}}. \label{eq21}
\end{gather}
The estimate \eqref{eq20} is deduced from the inequality \eqref{eq21}.\\
Now we check the validity of condition \eqref{eq6} for the bilinear term $B(u,v)(t)$. From the estimate \eqref{eq21} it follows that 
$$
\underset{t \rightarrow 0}{\rm lim\ }t^{\frac{\alpha}{2}}
\big\|B(u, v)(t)\big\|_{L^{\tilde q}} = 0,
$$
whenever
$$
\underset{t \rightarrow 0}{\rm lim\ }t^{\frac{\alpha}{2}}
\big\|u(t)\big\|_{L^{\tilde q}} = \underset{t \rightarrow 0}{\rm lim\ }
t^{\frac{\alpha}{2}}\big\|v(t)\big\|_{L^{\tilde q}} = 0.
$$
Finally, the continuity at $t=0$ of $t^{\frac{\alpha}{2}}B(u, v)(t)$ follows from the equality \eqref{eq19}. The continuity elsewhere follows from carefully rewriting the expression $\int^{t+\epsilon}_0 - \int^{t}_0$ and applying the same argument.
\end{proof} 
The following lemma, the proof of which is omitted, is a  
generalization of Lemma \ref{lem7}.
\begin{lemma}\label{lem8} Let $d \leq q \leq \tilde q_2 < \infty$ 
and $q <  \tilde q_1 < \infty$ be such that one of the following conditions is satisfied. 
$$
q < \tilde q_1 < 2d, q \leq \tilde q_2 < \frac{d\tilde q_1}{2d - \tilde q_1}, 
$$
or
$$
2d \leq \tilde q_1 \leq 2q, q \leq \tilde q_2 < \infty,
$$
or
$$
2q < \tilde q_1 < \infty, \frac{\tilde q_1}{2} < \tilde q_2 < \infty.
$$
Then the bilinear operator $B$ is continuous from 
$\mathcal{K}^{\tilde q_1}_{q,T} \times 
\mathcal{K}^{\tilde q_1}_{q,T}$ into 
$\mathcal{K}^{\tilde q_2}_{q,T}$, and we have the inequality
\begin{equation*}
\big\|B(u, v)\big\|_{\mathcal{K}^{\tilde q_2}_{q,T}} \leq
 CT^{\frac{1}{2}(1  - \frac{d}{q})}
\big\|u\big\|_{\mathcal{K}^{\tilde q_1}_{q,T}}
\big\|v\big\|_{\mathcal{K}^{\tilde q_1}_{q,T}},
\end{equation*}
where C is a positive constant and independent of T.
\end{lemma}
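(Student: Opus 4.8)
The plan is to repeat the argument of Lemma~\ref{lem7} almost verbatim, but to carry two distinct integrability exponents through the computation: the input exponent $\tilde q_1$ in which $u$ and $v$ are measured, and the output exponent $\tilde q_2$ in which $B(u,v)$ is measured. Lemma~\ref{lem7} is the diagonal case $\tilde q_1=\tilde q_2$, and the only genuinely new ingredient is an off-diagonal version of the pointwise-in-time estimate \eqref{eq17}. The analysis then splits into the same two elementary tools used there, namely Young's inequality for the heat--projection kernel and the Beta-integral of Lemma~\ref{lem4}.

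First I would reprove \eqref{eq17} off the diagonal. By Lemma~\ref{lem1} with $s=0$ the operator $\me^{(t-\tau)\Delta}\mathbb P\nabla$ has kernel $(t-\tau)^{-\frac{d+1}{2}}K\big(\tfrac{\cdot}{\sqrt{t-\tau}}\big)$ with $|K(x)|\lesssim(1+|x|)^{-(d+1)}$, so $\|K\|_{L^r}<\infty$ for every $r\geq1$. Young's inequality for the convolution $L^r * L^{\tilde q_1/2}\to L^{\tilde q_2}$ with $\frac1r=1+\frac1{\tilde q_2}-\frac2{\tilde q_1}$, together with H\"older's inequality $\|u\otimes v\|_{L^{\tilde q_1/2}}\leq\|u\|_{L^{\tilde q_1}}\|v\|_{L^{\tilde q_1}}$ (legitimate since $\tilde q_1>q\geq d\geq 2$) and the scaling identity $\|K(\cdot/\sqrt{t-\tau})\|_{L^r}=(t-\tau)^{\frac{d}{2r}}\|K\|_{L^r}$, yields
\[
\big\|\me^{(t-\tau)\Delta}\mathbb P\nabla\cdot(u(\tau)\otimes v(\tau))\big\|_{L^{\tilde q_2}}\lesssim(t-\tau)^{-\frac12+\frac{d}{2\tilde q_2}-\frac{d}{\tilde q_1}}\big\|u(\tau)\big\|_{L^{\tilde q_1}}\big\|v(\tau)\big\|_{L^{\tilde q_1}}.
\]
Inserting $\|u(\tau)\|_{L^{\tilde q_1}}\leq\tau^{-\frac{\alpha_1}{2}}\|u\|_{\mathcal K^{\tilde q_1}_{q,T}}$ with $\alpha_1=d(\frac1q-\frac1{\tilde q_1})$ into \eqref{eq10} and applying Lemma~\ref{lem4} with $\gamma=\frac12-\frac{d}{2\tilde q_2}+\frac{d}{\tilde q_1}$ and $\theta=\alpha_1$, a direct power count gives $1-\gamma-\theta=\frac12+\frac{d}{2\tilde q_2}-\frac dq$. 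Multiplying the resulting bound on $\|B(u,v)(t)\|_{L^{\tilde q_2}}$ by $t^{\frac{\alpha_2}{2}}$ with $\alpha_2=d(\frac1q-\frac1{\tilde q_2})$ collapses the time exponent to $\frac12(1-\frac dq)$, which is nonnegative since $q\geq d$, so the supremum over $0<t<T$ produces exactly the claimed inequality. The limit condition \eqref{eq6} and the continuity in $t$ then follow word for word as in Lemma~\ref{lem7}.

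The main obstacle is purely bookkeeping: checking that the three listed hypotheses on $(\tilde q_1,\tilde q_2)$ are precisely the conditions legitimizing the two nontrivial steps. Young's inequality requires $r\geq1$, i.e. $\tilde q_2\geq\tilde q_1/2$, and Lemma~\ref{lem4} requires $\gamma<1$, i.e. $\frac{2d}{\tilde q_1}-\frac d{\tilde q_2}<1$; the remaining requirement $\theta<1$ is automatic because $q\geq d$ forces $\theta<\frac dq\leq1$. In the regime $\tilde q_1<2d$ the inequality $\gamma<1$ rearranges to $\tilde q_2<\frac{d\tilde q_1}{2d-\tilde q_1}$, which is the first listed alternative, while for $\tilde q_1\geq2d$ one has $\frac{2d}{\tilde q_1}\leq1$ so $\gamma<1$ holds for free, covering the second and third alternatives. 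The side condition $\tilde q_2\geq\tilde q_1/2$ follows from $\tilde q_2\geq q\geq\tilde q_1/2$ whenever $\tilde q_1\leq2q$ (first and second alternatives) and is imposed explicitly when $\tilde q_1>2q$ (third alternative). Hence the three cases exhaust exactly the admissible region and no analytic difficulty arises beyond Lemma~\ref{lem7}.
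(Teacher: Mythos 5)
Your proposal is correct: the paper itself omits the proof of Lemma~\ref{lem8}, stating only that it is a generalization of Lemma~\ref{lem7}, and your argument is exactly that generalization — the off-diagonal Young estimate with $\frac1r=1+\frac1{\tilde q_2}-\frac2{\tilde q_1}$, followed by Lemma~\ref{lem4} with $\gamma=\frac12-\frac{d}{2\tilde q_2}+\frac{d}{\tilde q_1}$, $\theta=d(\frac1q-\frac1{\tilde q_1})$, reproducing the time exponent $\frac12(1-\frac dq)$. Your case-checking that the three hypotheses are precisely what make $r\geq1$ and $\gamma<1$ hold is also accurate, so nothing is missing.
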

\vskip 0.5cm
{\bf Proof of Theorem \ref{th1}}
\vskip 0.5cm
(a) From Lemma \ref{lem7}, $B$ is 
continuous from $\mathcal{K}^{\tilde q}_{q,T} 
\times \mathcal{K}^{\tilde q}_{q,T}$ to 
$\mathcal{K}^{\tilde q}_{q,T}$ and we have the inequality
$$
\big\|B(u, v)\big\|_{\mathcal{K}^{\tilde q}_{q,T}} 
\leq C_{q,\tilde q, d}T^{\frac{1}{2}(1-\frac{d}{q})}
\big\|u\big\|_{\mathcal{K}^{\tilde q}_{q,T}}
\big\|v\big\|_{\mathcal{K}^{\tilde q}_{q,T}} =
 C_{q,\tilde q, d}T^{\frac{1}{2}(1+s-\frac{d}{p})}
\big\|u\big\|_{\mathcal{K}^{\tilde q}_{q,T}}
\big\|v\big\|_{\mathcal{K}^{\tilde q}_{q,T}},
$$
where $C_{q,\tilde q,d}$ is a positive constant independent 
of  $T$. From Theorem \ref{th2} and the above inequality, 
we deduce that for any $u_0 \in  \dot{H}^s_p$ satisfying
$$
T^{\frac{1}{2}(1+s-\frac{d}{p})}
\big\|\me^{t\Delta}u_0\big\|_{\mathcal{K}^{\tilde q}_{q,T}} 
= T^{\frac{1}{2}(1+s-\frac{d}{p})}\underset{0 < t < T}{{\rm sup}}
t^{\frac{\alpha}{2}}\big\|\me^{t\Delta}u_0\big\|_{L^{\tilde q}} 
\leq  \frac{1}{4C_{q,\tilde q, d}},
$$
where
$$
\alpha = d\Big(\frac{1}{q}- \frac{1}{\tilde q}\Big) 
= d\Big(\frac{1}{p}- \frac{s}{d} - \frac{1}{\tilde q}\Big),
$$
NSE has a solution $u$ on the interval $(0, T)$ 
so that $u \in \mathcal{K}^{\tilde q}_{q,T}$.\\
We prove that  $u \in \underset{r > {\rm max}\{p, q\}}
{\bigcap}\mathcal{K}^r_{q,T}$. We consider three cases 
$q < \tilde q < 2d$ and $2d \leq \tilde q \leq 2q$, 
and $2q < \tilde q < \infty$ separately.\\
Note that if ${\rm max}\{p,q\} \geq 2d$ then there does not exist 
$\tilde q$ satisfying the condition of the first case, 
and if  $p \geq 2q$ then there does not exist $\tilde q$ satisfying 
the condition of the second case. Therefore the number of cases 
that can occur depends on $s$ and $p$.\\
First, we consider the case $q < \tilde q < 2d$. 
There are two possibilities $\tilde q > \frac{4d}{3}$ 
and $\tilde q \leq \frac{4d}{3}$.  In the case $\tilde q > \frac{4d}{3}$, 
we apply  Lemmas \ref{lem5} and \ref{lem8} 
to obtain $u \in \mathcal{K}^r_{q,T}$ for all $r$ satisfying 
${\rm max}\{p, q\} < r < \tilde q_1 $ where 
$\tilde q_1 = \frac{d\tilde q}{2d - \tilde q} > 2d$. 
Thus, $u \in \mathcal{K}^{2d}_{q,T}$. Applying again 
Lemmas \ref{lem5} and \ref{lem8}, we deduce that
 $u \in \mathcal{K}^r_{q,T}$ for all $r > {\rm max}\{p, q\}$. In the case 
$\tilde q \leq \frac{4d}{3}$, we set up the following series of numbers  
$\{\tilde q_i\}_{0 \leq i \leq N}$ by induction. Set $\tilde q_0 = \tilde q$ 
and $\tilde q_1 = \frac{d\tilde q_0}{2d - \tilde q_0}$. 
We have $\tilde q_1 >  \tilde q_0$. If $\tilde q_1 > \frac{4d}{3}$ 
then set $N=1$ and stop here. In the case $\tilde q_1 \leq \frac{4d}{3}$ 
set $\tilde q_2 = \frac{d\tilde q_1}{2d - \tilde q_1}$. 
We have $\tilde q_2 >  \tilde q_1$. If $\tilde q_2 > \frac{4d}{3}$ then set 
$N=2$ and stop here. In the case $\tilde q_2 \leq \frac{4d}{3}$,   
set $\tilde q_3 = \frac{d\tilde q_2}{2d - \tilde q_2}$.  
We have $\tilde q_3 >  \tilde q_2$, and so on, there exists  $k \geq 0$ 
such that $\tilde q_k \leq \frac{4d}{3}, \tilde q_{k+1}   
= \frac{d\tilde q_k}{2d - \tilde q_k} > \frac{4d}{3}$. 
We set $N = k+1$ and stop here, and we have
\begin{gather*}
\tilde q_0 = \tilde q, \tilde q_i = \frac{d\tilde q_{i-1}}{2d 
- \tilde q_{i-1}}, \tilde q_i > \tilde q_{i-1}\ {\rm for}\ i =1,2,3,..,N,\\
2d \geq \tilde q_{N} > \frac{4d}{3} \geq \tilde q_{N-1}.
\end{gather*}
From  $u \in \mathcal{K}^{\tilde q_0}_{q,T}$,  applying 
Lemmas \ref{lem5} and \ref{lem8} to obtain 
$u \in \mathcal{K}^r_{q,T}$ for all $r$ satisfying 
${\rm max}\{p, q\} < r < \tilde q_1$.  Then applying again 
Lemmas \ref{lem5} and \ref{lem8} to get 
$u \in \mathcal{K}^r_{q,T}$ for all $r$ satisfying 
${\rm max}\{p, q\} < r < \tilde q_2$, and so on, 
finishing we have $u \in \mathcal{K}^r_{q,T}$ for all $r$ 
satisfying ${\rm max}\{p, q\} < r < \tilde q_N$. 
Therefore $u \in \mathcal{K}^r_{q,T}$ for all $r$ satisfying 
$\frac{4d}{3} < r < \tilde q_N$. From the proof of the 
case $\tilde q > \frac{4d}{3}$, we have $u \in \mathcal{K}^r_{q,T}$ 
for all $r > {\rm max}\{p, q\}$.\\
We now consider the case $2d \leq \tilde q \leq 2q$. 
We show that $u \in \mathcal{K}^r_{q,T}$ for all $ r > {\rm max}\{p, q\}$. 
This is easily deduced by applying Lemmas \ref{lem5} and \ref{lem8}.\\
Finally, we consider the case $2q < \tilde q < \infty$. 
Let $i \in \mathbb{N}$ be such that 
$$
\frac{\tilde q}{2^{i-1}} \geq {\rm max}\{2q,p\} > \frac{\tilde q}{2^i}.
$$
From $\tilde q > {\rm max}\{p, q\}$ and $\tilde q>2q$, we have 
$\tilde q>{\rm max}\{2q,p\}$, hence $i \geq 1$. 
Applying Lemmas \ref{lem5} and \ref{lem8} to obtain 
$u \in \mathcal{K}^r_{q,T}$ for all $ r > \frac{\tilde q}{2}$. 
Applying again Lemmas \ref{lem5} and \ref{lem8} to get 
$u \in \mathcal{K}^r_{q,T}$ for all $ r > \frac{\tilde q}{2^2}$, 
and so on, finishing we have $u \in \mathcal{K}^r_{q,T}$ 
for all $r > \frac{\tilde q}{2^{i-1}}$. Applying again 
Lemmas \ref{lem5} and \ref{lem8} to 
obtain $u \in \mathcal{K}^r_{q,T}$ for all 
$r > {\rm max}\{p,q, \frac{\tilde q}{2^i}\}$. 
If ${\rm max}\{p,q\} \geq \frac{\tilde q}{2^i}$ 
then we have $u \in \mathcal{K}^r_{q,T}$ for all 
$r > {\rm max}\{p,q\}$. If ${\rm max}\{p,q\} < \frac{\tilde q}{2^i}$ 
then $2q > \frac{\tilde q}{2^i}$. Thus $u \in \mathcal{K}^r_{q,T}$ 
for all $r$ satisfying $r>\frac{\tilde q}{2^i}$, 
hence $u \in \mathcal{K}^{2q}_{q,T}$. Applying 
Lemmas \ref{lem5} and \ref{lem8} to obtain 
$u \in \mathcal{K}^r_{q,T}$ for all $r > {\rm max}\{p,q\}$. This proves the result.\\
We now prove that $u \in BC([0, T); \dot{H}^s_p)$. 
Indeed, from $u \in \mathcal{K}^r_{q,T}$ for all $r > {\rm max}\{p,q\}$, applying Lemma \ref{lem6} to obtain
$B(u,u) \in \mathcal{N}^s_{p,T} \subseteq BC\big([0, T); 
\dot{H}^s_p\big)$. On the other hand, since $u \in \dot{H}^s_p$, it follows that  $\me^{t\Delta}u_0 \in BC\big([0, T); \dot{H}^s_p\big)$. Therefore
$$
u = \me^{t\Delta}u_0 - B(u,u) \in  BC\big([0, T); \dot{H}^s_p\big).
$$
Finally, we will show that the condition \eqref{eq2} is valid when $T$ is small enough. Indeed, from the definition of $\mathcal{K}^{ \tilde q}_{q,T}$ 
and Lemma \ref{lem5}, 
we deduce that the left-hand side of the condition \eqref{eq2} 
converges to $0$ when $T$ goes to $0$. Therefore the 
condition \eqref{eq2} holds for 
arbitrary $u_0 \in \dot{H}^s_p (\mathbb{R}^d)$ 
when $T(u_0)$ is small enough. \\
(b) From Lemma \ref{lem3}, the two quantities 
$\big\|u_0\big\|_{\dot{B}^{\frac{d}{\tilde q} - 1, \infty}_{\tilde q}}$ 
and  $\underset{0 < t < \infty}{{\rm sup}}t^{\frac{1}{2}(1- \frac{d}{\tilde q})}\big\|\me^{t\Delta}u_0\big\|_{L^{\tilde q}}$  are equivalent. 
Thus, there exists a positive constant $\sigma_{\tilde q,d}$ 
such that the condition
\eqref{eq2} holds for $T = \infty$ whenever $\big\|u_0\big\|_{\dot{B}^{\frac{d}
{\tilde q} - 1, \infty}_{\tilde q}} \leq \sigma_{\tilde q,d}$.\qed 
\vskip 0.2cm
{\bf Proof of Proposition \ref{pro1}}
\vskip 0.2cm
By Theorem \ref{th1}, we only need to prove that $w \in \mathcal{N}^{\frac{d}{\tilde p}-1}_{{\tilde p},T}$ 
for all $\tilde p >  \frac{1}{2}{\rm max}\{p, d\}$. Indeed, applying Lemma \ref{lem6}, we deduce that the bilinear operator $B$ is continuous from $\mathcal{K}^r_{d,T} \times \mathcal{K}^r_{d,T}$ into $\mathcal{N}^{\frac{d}
{\tilde p}-1}_{{\tilde p},T}$ for all $\tilde p > \frac{d}{2}$ and $r$ satisfying  $d < r \leq 2\tilde p$, hence from $u \in \underset{r > {\rm max}\{p,d\}}{\bigcap}\mathcal{K}^r_{d,T}$ and $2\tilde p > {\rm max}\{p,d\}$, we have $w=-B(u,u) 
\in \mathcal{N}^{\frac{d}{\tilde p}-1}_{{\tilde p},T}$. The proof Proposition \ref{pro1} is complete. \qed  
\vskip 0.5cm
{\bf Proof of Proposition \ref{pro2}}
\vskip 0.5cm
By Lemma \ref{lem3}, we deduce 
that the two quantities $\big\|u_0\big\|_{\dot{B}^{s- 
(\frac{d}{p} - \frac{d}{\tilde q}), \infty}_{\tilde q}}$\linebreak 
and $\underset{0 < t < \infty}{{\rm sup}}
t^{\frac{d}{2}(\frac{1}{p}- \frac{s}{d} 
- \frac{1}{\tilde q})}\big\|\me^{t\Delta}u_0\big\|_{L^{\tilde q}}$ 
are equivalent. Therefore
$$
\underset{0 < t < T}{{\rm sup}}t^{\frac{d}{2}
(\frac{1}{p}- \frac{s}{d} - \frac{1}{\tilde q})}
\big\|\me^{t\Delta}u_0\big\|_{L^{\tilde q}} 
\lesssim \big\|u_0\big\|_{\dot{B}^{s- (\frac{d}{p} 
- \frac{d}{\tilde q}), \infty}_{\tilde q}}.
$$
Proposition \ref{pro2} is proved by applying the above 
inequality and Theorem \ref{th1}. \qed

\section*{Acknowledgments.} This research was supported by Vietnam
National Foundation for Science and Technology Development
(NAFOSTED) under grant number 101.02-2014.50.

\end{document}